\documentclass{amsart}
\usepackage{latexsym,amsmath,amsfonts,eucal,amscd,amssymb,mathrsfs, graphpap,mathdots,comment}
\usepackage[all]{xy}
\usepackage{graphics}

\DeclareMathOperator{\spec}{Spec}  
  \DeclareMathOperator{\proj}{Proj}

\def\rr{\mathbb{R}}

\def\zz{\mathbb{Z}}
\def\qq{\mathbb{Q}}
\def\pp{\mathbb{P}}

\def\scr#1{\mathscr{#1}}

\def\too{\longrightarrow}

\def\incl{\hookrightarrow}

\def\blow#1{X^{(#1)}}

\def\gcdpv{\mathrm{gcd}_v^+}

\newtheorem{theorem}{Theorem}
\newtheorem{lemma}[theorem]{Lemma}

\newtheorem{conjecture}[theorem]{Conjecture}
\newtheorem{proposition}[theorem]{Proposition}
\theoremstyle{definition}
\newtheorem{definition}[theorem]{Definition}
\newtheorem{remark}[theorem]{Remark}

\newenvironment{thmbis}[1]
  {%
   \addtocounter{theorem}{-1}%
   \begin{theorem}}
  {\end{theorem}}

\def\vplus{v^{+}}

\def\blow#1{X_{#1}}

\def\rad{\mathrm{rad}}
\def\trunloc{\lambda_v^{(1)}}



\begin{document}

\title[Vojta's Conjecture on Multiple Blowups and the $abc$]{Vojta's Conjecture on Multiple Blowups of $\pp^2$ and the $abc$ conjecture}
\author{Yu Yasufuku}

\thanks{Supported in part by JSPS Grant-in-Aid 15K17522 and by Nihon University College of Science and Technology Grant-in-Aid for Fundamental Science Research.}

\maketitle

\begin{abstract}
We show that Vojta's conjecture for some rational surfaces is related to the $abc$ conjecture.  More specifically, we prove that Vojta's conjecture on these surfaces implies a special case of the $abc$ conjecture, while the $abc$ conjecture implies Vojta's conjecture on these surfaces.  Moreover, for similar but different rational surfaces, we prove Vojta's conjecture unconditionally.  To prove these results, we use some (possibly new) properties of Farey sequences.

\vspace{0.1in} \noindent Mathematics Subject Classification (2010): 11J97, 14G25, 11J87, 14G40, 11B57, 14J26

\vspace{0.1in} \noindent \textit{Key Words:} Vojta's conjecture, rational surfaces, blowups, subspace theorem,
$abc$ conjecture, Farey sequences.
\end{abstract}

\section{Introduction}

Vojta's conjecture \cite[Main Conjecture (Conjecture 3.4.3)]{vojta} is a deep conjecture in Diophantine geometry which describes how the geometry of the variety controls the distribution of rational points.  It was developed as an analogy with the Griffiths conjecture, which is a high-dimensional version of the Second Main Theorem in Nevanlinna theory.
Vojta's conjecture is stated for a normal-crossing divisor on a smooth variety, and it is extremely deep. Its special cases include the Mordell's conjecture proved by Faltings, the description of integral points on abelian varieties \cite{falabel}, and Schmidt's subspace theorem.  It is also known to imply several conjectures, including Lang's conjecture on degeneracy of rational points on varieties of general type and the Masser-Oesterl\'e $abc$ conjecture, whose proof has been announced by Mochizuki \cite{mochizuki}.

In this paper, we unconditionally prove some special cases of Vojta's conjecture, and also explore the relationship between Vojta's conjecture on certain rational surfaces and the $abc$ conjecture.  The $abc$ conjecture is really ``the'' true analog of the Second Main Theorem of the Nevanlinna theory, since both feature the truncated counting function. In particular, the $abc$ conjecture is considered to be a one-dimensional phenomenon.  The results of this paper shows that the $abc$ conjecture has important consequences and interrelations with Vojta's conjecture on surfaces.

More specifically, we prove three theorems in this paper.  We will defer until the next section the precise statements of Vojta's conjecture and the $abc$ conjecture as well as definitions of local and global heights.  Our first theorem is a completely unconditional proof of Vojta's conjecture in certain case.  In the following, $\sim$ above divisors indicate their strict transforms.

\begin{theorem}\label{thm:vojtawithy}
Let $L_1, L_2, L_3$ be three lines of $\pp^2$ defined over $\overline \qq$ in general position.  We let $X_1$ be the blowup of $\pp^2$ at a point defined over $\overline \qq$ in $L_1 \setminus (L_2 \cup L_3)$ with $E_1$ as the exceptional divisor.  For $n\ge 2$, we construct $X_n$ inductively by blowing up $X_{n-1}$ at (the unique) point of $E_{n-1} \cap \widetilde{L_1}$ and obtaining the exceptional divisor $E_n$.  Then Vojta's conjecture holds for $X_n$ with respect to the divisor
\[
\widetilde{L_1}+\widetilde{L_2}+\widetilde{L_3}+\widetilde{E_1} + \cdots +\widetilde{E_{n-1}} + E_n.
\]
\end{theorem}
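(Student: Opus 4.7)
The plan is to reduce Vojta's conjecture on $(X_n,D)$ to Schmidt's Subspace Theorem applied to four lines in $\mathbb{P}^2$, using the blowdown $\pi\colon X_n\to\mathbb{P}^2$ to translate local heights.

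I would start with the divisor-class bookkeeping. By induction on the blowups one checks
\[
\pi^{*}L_1\sim\widetilde{L_1}+\widetilde{E_1}+2\widetilde{E_2}+\cdots+nE_n,\qquad K_{X_n}=-3\widetilde{L_1}-2\widetilde{E_1}-4\widetilde{E_2}-\cdots-2nE_n.
\]
Since $P_1\notin L_2\cup L_3$, both $\widetilde{L_2}$ and $\widetilde{L_3}$ are linearly equivalent to $\pi^{*}L_1$, and substitution yields $K_{X_n}+D\sim\widetilde{E_1}+\cdots+\widetilde{E_{n-1}}+E_n$. Hence Vojta for $(X_n,D)$ is equivalent to the bound $h_{\widetilde{E_1}+\cdots+E_n}(P)\le N_S(D,P)+\epsilon\,h_A(P)+O(1)$ outside a proper Zariski closed subset of $X_n$.

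The main technical step is a pointwise local height identity. Fix a line $M$ through $P_1$ with $M\neq L_1$ and set $a_v:=\lambda_{L_1,v}(P_0)$, $b_v:=\lambda_{M,v}(P_0)$ where $P_0:=\pi(P)$. I claim that at each place $v$,
\[
\sum_{i=1}^{n-1}\lambda_{\widetilde{E_i},v}(P)+\lambda_{E_n,v}(P)\;=\;\min(a_v,b_v)+O_v(1)\;=\;\lambda_{P_1,v}(P_0)+O_v(1).
\]
To verify this I would use the affine charts of $X_n$ covering neighborhoods of the chain components $\widetilde{L_1},E_n,\widetilde{E_{n-1}},\ldots,\widetilde{E_1}$; in suitable coordinates around $P_1$ these can be written as $(x/z^{k-1},z^k/x)$ for $k=1,\ldots,n$, together with $(x/z^n,z)$ for the chart containing $\widetilde{L_1}\cap E_n$. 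Case analysis on the regime of $a_v/b_v$ shows that in each chart one or two chain components contribute, and their local heights telescope to $\min(a_v,b_v)$. Combined with the pullback formula for $L_1$, this rewrites the Vojta inequality above as
\[
m_S(P_1,P_0)+m_S(L_1+L_2+L_3,P_0)\;\le\;(3+\epsilon)\,h(P_0)+O(1)
\]
outside a proper Zariski closed subset of $\mathbb{P}^2$.

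To prove this last inequality I would choose $M$ so that $\{L_1,L_2,L_3,M\}$ is in general position; since $P_1\notin L_2\cup L_3$, this only requires $M$ to avoid the unique line through $P_1$ and $L_2\cap L_3$. Then $\lambda_{P_1,v}(P_0)\le\lambda_{M,v}(P_0)$ gives $m_S(P_1,P_0)\le m_S(M,P_0)$, reducing matters to $m_S(L_1+L_2+L_3+M,P_0)\le(3+\epsilon)h(P_0)+O(1)$, which is exactly Schmidt's Subspace Theorem for four lines in $\mathbb{P}^2$ in general position, valid outside a finite union of lines. The hard part will be the local height identity: tracking precisely which chart contains the lift $P$ at each place $v$, and verifying that the $O_v(1)$-errors accumulate only to a global $O(1)$ after summation over all $v$.
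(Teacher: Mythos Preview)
Your reduction of Vojta's inequality to the displayed inequality
\[
m_S(P_1,P_0)+m_S(L_1+L_2+L_3,P_0)\le(3+\epsilon)h(P_0)+O(1)
\]
is not correct: this is strictly \emph{weaker} than what Vojta actually requires, so proving it via Schmidt for four lines does not establish the theorem. Here is the discrepancy. Your divisor bookkeeping is fine, and indeed $K_{X_n}+D\sim\pi^*E_1=\widetilde{E_1}+\cdots+E_n$, so Vojta is equivalent to $h_{\pi^*E_1}(P)\le N_S(D,P)+\epsilon h_A(P)+O(1)$. But in $N_S(D,P)$ the $L_1$-part is $N_S(\widetilde{L_1},P)$, not $N_S(L_1,P_0)$. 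Writing out both sides, the difference between the true Vojta inequality and your displayed one is exactly
\[
N_S(L_1,P_0)-N_S(\widetilde{L_1},P)\;=\;\sum_{i=1}^n N_S(\pi^*E_i,P),
\]
which is unbounded (e.g.\ for $P_0=[p+1:p^2:1]$ with $p$ prime outside $S$ it equals $2\log p$ already for $n=2$). Your local height identity $\sum_i\lambda_{\widetilde{E_i},v}(P)=\lambda_{P_1,v}(P_0)$ is just functoriality for $\pi^*E_1$ and does not touch this term; the ``pullback formula for $L_1$'' involves the weighted sum $\sum_i i\,\lambda_{\widetilde{E_i},v}$, which your identity does not control.

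What the paper does instead is to split the problem in two. For each $v\notin S$ one checks the elementary pointwise bound $\sum_{i=1}^n\lambda_v(\pi^*E_i,P)\le\lambda_v(L_1,P_0)$ (their Lemma~\ref{lem:elementary}; equivalently $\lambda_v(\widetilde{L_1},P)\ge0$ for the explicit local heights), which absorbs the term you dropped using only $N_S(L_1,P_0)$. For the $S$-part, one needs the \emph{sharper} bound $m_S(P_1,P_0)\le\epsilon\,h(a)+N_S(L_2+L_3,P_0)+O(1)$, obtained by first discarding the $b$-direction via $\lambda_{P_1,v}(P_0)\le v^+(a-1)$ and then applying Roth on $\pp^1$ to the $x$-coordinate $a$ (their Proposition~\ref{prop:ridout}). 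Your four-line Schmidt on $\pp^2$ only gives $m_S(P_1,P_0)\le N_S(L_1+L_2+L_3,P_0)+\epsilon h(P_0)$, which wastes $N_S(L_1,P_0)$ and leaves nothing to pay for the exceptional contributions.
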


The special case of $X_1$ had been treated by our earlier work \cite{yasufuku2}, but this theorem extends to arbitrary number of blowups, as long as the points which are blown up are always on $\widetilde{L_1}$.

We now consider the case of multiple blowups where we start from the same $X_1$, but we blow up at a point not on $\widetilde{L_1}$ at least once.  In the following result, $|x|_S'$ for $x\in \zz$ and a finite set $S$ of primes denotes the prime-to-$S$ part of $x$, namely the largest divisor of $x$ which is relatively prime to any primes in $S$; this notion will be extended to a general element of $\overline \qq$ in the next section.

\begin{theorem}\label{thm:vctoabc}
Let $L_1, L_2, L_3$ be three lines of $\pp^2$ defined over $\overline \qq$ in general position.  We let $X_1$ be the blowup of $\pp^2$ at a point defined over $\overline \qq$ in $L_1 \setminus (L_2 \cup L_3)$ with $E_1$ as the exceptional divisor.  For $n\ge 2$, we construct $X_n$ inductively by blowing up $X_{n-1}$ at a point which is the intersection point of two of $\widetilde{L_1}, \widetilde{E_1}, \ldots, \widetilde{E_{n-2}}, E_{n-1}$, obtaining the new exceptional divisor $E_n$.  Further, let us assume that at least one blowup occurs at a point outside of $\widetilde{L_1}$. Then Vojta's conjecture for such an $X_n$ with respect to the divisor
\[
\widetilde{L_1}+\widetilde{L_2}+\widetilde{L_3}+\widetilde{E_1} + \cdots +\widetilde{E_{n-1}} + E_n
\]
implies the $abc$ conjecture for subsets of the following special form: for a number field $k$, a finite set $S$ of places,
\[
\{(a,b,c): a\in k,  |a|_S' = 1, b=1-a, c=1\}.
\]
\end{theorem}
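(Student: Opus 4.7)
After a projective linear change of coordinates on $\pp^2$ (whose effect on heights and radicals is absorbed into the $O_\epsilon(1)$ of the final inequality), I would assume $L_1 = \{x=0\}$, $L_2 = \{y=0\}$, $L_3 = \{z=0\}$, and $P_0 = (0:1:1)$. The $abc$-family $P_a = (a:1-a:1)$ then lies on the line $C = \{x+y=z\}$, which passes through $P_0$. Let $\widetilde{C} \subset X_n$ denote its strict transform. Since the blowup sequence of the theorem is supported only on intersections involving $\widetilde{L_1}$ or previously-constructed exceptional divisors, and none of these intersections lies on the tangent direction of $C$ at $P_0$, the curve $\widetilde{C}$ meets $\widetilde{E_1}$ transversally at the unique point corresponding to $a=0$, is disjoint from $\widetilde{L_1}$, and avoids every $\widetilde{E_j}$ for $j \geq 2$ and $E_n$.

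Inductive computation along the blowup sequence gives the class of $K_{X_n}+D$ in $\mathrm{Pic}(X_n)$ as an effective sum of exceptional divisors. The crucial algebraic observation is that the off-$\widetilde{L_1}$ hypothesis forces this class to contain at least one $E_i$ or $\widetilde{E_i}$ with coefficient $\geq 2$, in contrast to the setup of Theorem~\ref{thm:vojtawithy}, where all such coefficients would be exactly $1$. Computing the local heights at the lifts $\widetilde{P_a}$ of points with $|a|_S'=1$, one obtains $h_{K_{X_n}+D}(\widetilde{P_a}) = h(a) + O(1)$ (the $S$-unit product formula applied to the only globally nonzero contribution, namely from $\widetilde{E_1}$), and $N_S(D, \widetilde{P_a}) = \sum_{v\notin S,\,|1-a|_v<1} (-\log|1-a|_v) + O(1)$ from the $\widetilde{L_2}$ term at primes dividing $1-a$.

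Applying Vojta's conjecture to the sequence $\{\widetilde{P_a}\}$ then yields
\[
h(a) \leq \log N_S(1-a) + \epsilon\, h(a) + O_\epsilon(1),
\]
which is only the non-truncated Thue--Siegel--Roth-type bound. The main obstacle is to upgrade this to the truncated inequality
\[
h(a) \leq (1+\epsilon)\log \rad(1-a) + O_\epsilon(1)
\]
demanded by $abc$. Here the off-$\widetilde{L_1}$ assumption is essential: the higher-coefficient exceptional components in $K_{X_n}+D$ supply enough ``surplus'' in Vojta's inequality to control the excess multiplicities $\sum_{v\notin S}(v(1-a)-1)\log q_v$ by $\epsilon\,h(a)$, once one applies Vojta to suitable auxiliary families of rational points that probe the deeper exceptional divisors. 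The Farey-sequence combinatorics advertised in the abstract is the bookkeeping device that parametrizes these auxiliary families by rational approximations to $a$. Once the truncated bound is obtained, the hypothesis $|a|_S'=1$ makes $\log \rad(a(1-a)) = \log \rad(1-a) + O(1)$, and the desired $abc$ inequality for the specified subset follows.
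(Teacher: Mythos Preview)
Your proposal has a genuine gap, and it stems from two related errors.

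First, your claim that ``the off-$\widetilde{L_1}$ hypothesis forces $K_{X_n}+D$ to contain at least one $E_i$ or $\widetilde{E_i}$ with coefficient $\ge 2$'' is false. From the paper's computations \eqref{eq:canon_div} and \eqref{eq:div_intersect}, one has $K_n + D_n = \pi^*(E_1)$ \emph{regardless} of where the later blowups occur. The off-$\widetilde{L_1}$ hypothesis does not alter this class; it only alters the configuration of the irreducible components inside $\pi^{-1}(P_0)$. So there is no ``surplus'' in the divisor class itself to exploit.

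Second, and more fundamentally, your test points $P_a=(a:1-a:1)$ all lie on a single fixed curve $\widetilde C$, and you correctly observe that $\widetilde C$ avoids every exceptional component beyond $\widetilde{E_1}$. But this means that for $v\notin S$ and $a$ an $S$-unit, \emph{none} of the local heights $\lambda_v(E_i,\widetilde{P_a})$ for $i\ge 2$ are positive. Vojta's inequality at $\widetilde{P_a}$ therefore collapses to exactly the inequality one would get on $X_1$ (or even on $\pp^2$), namely the Roth/Ridout bound you yourself identify. The deeper blowups contribute nothing to the inequality at these points, so the off-$\widetilde{L_1}$ hypothesis cannot be felt. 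Your gesture toward ``auxiliary families that probe the deeper exceptional divisors'' is precisely the entire content of the theorem, and you have not supplied it.

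The paper's proof works oppositely: rather than fixing a curve and varying $a$, for each $a$ it \emph{constructs} a specific second coordinate $b=b(a)$ from the prime factorization $a-1=\delta\prod_p p^{n_p}$, namely $b=\prod_p p^{m_p}$ with $m_p\approx \tfrac{n_p(2n-3)}{2}$, chosen so that $\alpha_p=\tfrac{v_p(a-1)}{v_p(b)}$ lands exactly at the mediant $\tfrac{2}{2n-3}$ in the Stern--Brocot tree at the level corresponding to the first off-$\widetilde{L_1}$ blowup. Theorem~\ref{thm:farey} then shows that at this $[a:b:1]$ the sum $\sum_i\lambda_{v_p}(E_i,P)$ equals $v_p(b)(1-\tfrac{1}{2n-3})$, which when compared against $\lambda_{v_p}((Y=0),P)=v_p(b)$ leaves over exactly $v_p(p^{\lfloor n_p/2\rfloor})$. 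Summing over $p$ and feeding this into Vojta's inequality forces $\sum_{n_p\text{ odd}}v_p(p)\ge (1-\epsilon')h(a)-O(1)$, which is the truncated bound. The Farey combinatorics is not bookkeeping for auxiliary families; it is the mechanism that computes the precise local-height excess at a single, carefully engineered point.
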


An $a$ satisfying $|a|_S' = 1$ is called an $S$-unit.  Since every algebraic number becomes an $S$-unit for a large enough $S$, Theorem \ref{thm:vctoabc} says that the $abc$ conjecture is in a sense the ``uniform limit'' of Vojta's conjecture on these surfaces as $S$ is enlarged to the set of all places of $k$.  We will prove a more precise version of this theorem in Section \ref{sec:proof} (see Theorem \ref{thm:vctoabc}$'$ and Remark \ref{rem:vctoabc_ratl}).

We now discuss the implication in the other direction:

\begin{theorem}\label{thm:abctovc}
Let $L_1, L_2, L_3$ be three lines of $\pp^2$ defined over $\overline \qq$ in general position.  We let $X_1$ be the blowup of $\pp^2$ at a point defined over $\overline \qq$ in $L_1 \setminus (L_2 \cup L_3)$ with $E_1$ as the exceptional divisor.  For $n\ge 2$, we construct $X_n$ inductively by blowing up $X_{n-1}$ at a point which is the intersection point of two of $\widetilde{L_1}, \widetilde{E_1}, \ldots, \widetilde{E_{n-2}}, E_{n-1}$, obtaining the new exceptional divisor $E_n$.  Then the $abc$ conjecture implies Vojta's conjecture for $X_n$ with respect to
\[
\widetilde{L_1}+\widetilde{L_2}+\widetilde{L_3}+\widetilde{E_1} + \cdots +\widetilde{E_{n-1}} + E_n.
\]
\end{theorem}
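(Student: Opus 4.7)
The strategy is to push Vojta's inequality on $X_n$ down to $\pp^2$, reduce it to a Diophantine inequality involving truncated counting functions of linear forms in the coordinates, and then invoke the assumed $abc$ conjecture.

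Fix affine coordinates on $\pp^2$ so that $L_1, L_2, L_3$ are explicit lines (say $L_1 : y = 0$ with $L_2, L_3$ in general position with $L_1$), and so that the first blowup center is a convenient rational point of $L_1 \setminus (L_2 \cup L_3)$. Since every subsequent blowup is performed at a transverse intersection of two existing divisors among $\widetilde{L_1}, \widetilde{E_1}, \ldots, E_{n-1}$, we may describe each blowup center in local coordinates inherited from the preceding $X_{i-1}$. This yields the standard relations $K_{X_n} = \pi^* K_{\pp^2} + \sum_{i=1}^n E_i$ and, tracking the combinatorics of which strict transforms get further blown up, $E_i = \widetilde{E_i} + \sum_{j \in J_i} E_j$ for appropriate index sets $J_i \subseteq \{i+1, \dots, n\}$.

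For a rational point $P \in X_n(k)$ outside the support of $D = \widetilde{L_1}+\widetilde{L_2}+\widetilde{L_3}+\sum_{i<n}\widetilde{E_i}+E_n$, identify $P$ with its image $(x, y) \in \pp^2(k)$ and compute the local heights $\lambda_{\widetilde{L_j}, v}(P)$ and $\lambda_{\widetilde{E_i}, v}(P)$ in terms of $v$-adic valuations of the affine-linear functions that vanish along the chain of blowup centers. The crucial input is that, because each blowup is at a transverse intersection, the total exceptional contribution $\sum_{i < n} \lambda_{\widetilde{E_i}, v}(P) + \lambda_{E_n, v}(P)$ collapses, up to $O(1)$, to a truncated expression of the shape $\min\bigl(\log^+ |f(x,y)|_v^{-1}, \log^+ |g(x,y)|_v^{-1}\bigr)$ for affine-linear $f, g$ determined by the blowup sequence; summed over $v$, this yields truncated counting functions of $f$ and $g$. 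Combining with the local heights of $\widetilde{L_1}, \widetilde{L_2}, \widetilde{L_3}$ and expressing $K_{X_n}$ via $\pi^* K_{\pp^2}$ and the $E_i$, Vojta's inequality $m_{S, D}(P) + h_{K_{X_n}}(P) \le \epsilon h_A(P) + O(1)$ on $X_n$ reduces to a bound on $\pp^2$ of $abc$-type, roughly
\[
h(P) \le \sum_{v \in S} \log^+ |1/f(x,y)|_v + \sum_{v \in S} \log^+ |1/g(x,y)|_v + (\text{linear terms from } L_2, L_3) + \epsilon h(P) + O(1).
\]

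The final step is to apply the assumed $abc$ conjecture over $k$ to an appropriate triple read off from the coordinates of $P$: the functions $f, g$ and a suitable linear combination of the equations of $L_1, L_2, L_3$ satisfy a linear relation giving an $abc$ triple whose truncated radical matches the right-hand side above, so $abc$ produces exactly the desired Vojta bound. The main obstacle lies in the middle step: one must verify, for every admissible pattern of blowups, that the aggregate non-truncated local height along the chain of exceptional divisors really does collapse to a truncated counting function on $\pp^2$. For blowups confined to $\widetilde{L_1}$ this is essentially the content of Theorem \ref{thm:vojtawithy} and the author's earlier work \cite{yasufuku2}; the new case here, where the chain of blowups can branch off in more than one direction, is where the bulk of the technical bookkeeping is concentrated.
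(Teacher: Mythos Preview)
Your overall shape --- push Vojta's inequality on $X_n$ down to $\pp^2$ and then feed the resulting Diophantine inequality into the $abc$ conjecture --- matches the paper. But the heart of the argument, which you flag as ``the main obstacle,'' is not as you describe it, and the discrepancy is not just bookkeeping.

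First, the local equations of the successive blowup centers are \emph{not} affine-linear in the $\pp^2$ coordinates. By Proposition~\ref{prop:divandcoordring}(ii), the $i$-th blowup center is cut out locally by monomial ratios $\frac{(x-1)^{b_i}}{y^{a_i}}$ and $\frac{y^{c_i}}{(x-1)^{d_i}}$, where $\frac{a_i}{b_i}$ and $\frac{c_i}{d_i}$ are Farey neighbors at level $i$ of the Stern--Brocot tree. Consequently, by Proposition~\ref{prop:blowupht}, the sum $\sum_i \lambda_v(E_i,P)$ does \emph{not} collapse to a single $\min(\log^+|f|_v^{-1},\log^+|g|_v^{-1})$; it is a genuine sum of such minima, one for each level of the tree. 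Bounding this sum is precisely the content of the Farey result Theorem~\ref{thm:farey}: for $\alpha_p = v_p(a-1)/v_p(b)$ the sum is at most $v_p(b)\cdot\bigl(1-\tfrac{1}{\mathrm{denom}(\alpha_p)}\bigr)+v_p(a-1)$, hence at most $v_p(b)+v_p(a-1)-v_p(p)$. The ``$-v_p(p)$'' is what produces a \emph{truncated} counting function after summing over $v$, and it is invisible without the Farey analysis.

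Second, because the blowup sequence can branch, different exceptional divisors $E_i$ correspond to different downward paths in the Stern--Brocot tree, and the relevant path depends on the place $v$ (through which Farey intervals contain $\alpha_p$). The paper handles this by, for each $v\notin S$ separately, rearranging the blowups so that the maximal ``relevant subtower'' contributing $v$-adically comes first; only then does Theorem~\ref{thm:farey} apply. Your proposal has no mechanism for this per-place rearrangement, and without it there is no uniform pair $(f,g)$ to which the exceptional sum collapses.

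Finally, the $abc$ triple is not read off from auxiliary linear forms $f,g$; it is simply $(a,1-a,1)$ (or $(A,A-B,B)$ in the non-integral case). The $abc$ conjecture gives $\sum_{n_p>0} v_p(p)\ge (1-\epsilon)h(a)-\sum_{v\notin S}\trunloc((0),a)-O(1)$, and subtracting this from $h(a-1)$ converts the Farey-derived local bound into the global Vojta inequality. Your sketch does not identify this link and as written would not close.
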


Therefore, Theorems \ref{thm:vctoabc} and \ref{thm:abctovc} show an inter-relation between the $abc$ conjecture and Vojta's conjecture on certain rational surfaces, although the $abc$ conjecture is generally considered to be a Diophantine result in dimension $1$.  It would be interesting to explore why these particular divisors behave this way.  Moreover, since the Diophantine content used in proving Theorem \ref{thm:abctovc} is believed to be significantly deeper than that for Theorem \ref{thm:vojtawithy}, it would be worthwhile to investigate if the two cases can be distinguished geometrically, such as by the intersection matrices of the irreducible components of the divisor. The variety itself is  the $n$-time blowup of $\pp^2$ in both cases, so the key must be in the arrangements of divisors.  These investigations should help prove other relationships between the $abc$ conjecture and Vojta's conjecture on higher-dimensional varieties.

The number-theoretic content of Vojta's conjecture on these surfaces is an inequality of greatest common divisors.  A typical consequence of Theorems \ref{thm:vojtawithy} and \ref{thm:abctovc} as well as their generalization (Proposition \ref{prop:extend}) is the following: given $\epsilon>0$, there exist a constant $C$ and a finite union $Z_\epsilon$ of algebraic curves such that
\[
\gcd(a-1, b) + \sum_{p\notin S} \sum_{i} \mathrm{gcd}_p^+\left(\frac{(a-1)^{b_{i}}}{b^{a_{i}}}, \frac {b^{c_{i}}}{(a-1)^{d_{i}}}\right) < \epsilon \log \max(|a|, |b|) + \log |ab|_S'  + C
\]
holds for all $(a,b)\in \zz^2\setminus Z_\epsilon$, where $\gcd$ is the logarithm of the greatest common divisor, $\mathrm{gcd}_p^+$ is the log gcd of the $p$-adic part of the numerators, and $\frac{a_i}{b_i}$ and $\frac{c_i}{d_i}$ are Farey neighbors.  All of these notions will be defined precisely later.  The connection between Vojta's conjecture on blowups and gcd inequalities is discussed in \cite{sil}, and the results of this article give concrete new examples of this connection.

The proofs of our theorems involve Schmidt's subspace theorem and some geometric computations.  One key turns out to be a property (Theorem \ref{thm:farey}) of Farey fractions.  To prove Theorem \ref{thm:abctovc}, instead of directly computing the $v$-adic contribution in the inequality of Vojta's conjecture, our idea is to rearrange the order of the blowups in a suitable way for each rational point $P$ and each place $v$ so that we can compare with the sequence of blowups that gives the largest $v$-adic contribution for the pair $(P, v)$. This largest contribution will then be computed by utilizing Farey fractions.

We briefly note why we insist of blowing up at intersections.  When we keep the same $X_1$ and then blow up at a point on $E_1$ that is not on $\widetilde{L_1}$, then \cite[Proposition 2]{yasufuku2} shows that Vojta's conjecture on this surface with respect to the divisor
\[
\widetilde{L_1}+\widetilde{L_2}+\widetilde{L_3}+\widetilde{E_1} + E_2
\]
implies the same special case of the $abc$ conjecture as Theorem \ref{thm:vctoabc} above.  Since Vojta's conjecture on blowups is stronger than that on the base, it follows immediately that any further blowups also have this property.  On the other hand, if we start from a different $X_1$, namely if we blow up at a point not on $L_1\cup L_2\cup L_3$, then Vojta's conjecture is not known.  Vojta's inequality on this blowup with respect to $\widetilde{L_1}+\widetilde{L_2}+\widetilde{L_3}+E_1$ has been proved by Corvaja and Zannier \cite{corzan} when restricted to the integral points on $\pp^2\setminus (L_1\cup L_2\cup L_3)$.  This special case already requires quite an ingenious application of Schmidt's subspace theorem, and verifying the conjecture in general in this case is presumably quite difficult.

It is of course natural to consider the counterparts of Theorems \ref{thm:vojtawithy} and \ref{thm:abctovc} in Nevanlinna theory.  As far as we know, these are still open problems, despite the fact that the counterparts of Schmidt's subspace theorem and the $abc$ conjecture (Cartan's theorem and the Second Main Theorem, respectively) are theorems for value-distribution theory.

The paper is organized as follows.  The next section recalls local and global heights, and precisely discusses Vojta's conjecture and the $abc$ conjecture. Section 3 contains the key result from Farey fractions, or more precisely those organized in the so-called Stern--Brocot tree.  Section \ref{sec:geom} contains some geometric computations, and the last section proves Theorems \ref{thm:vojtawithy}--\ref{thm:abctovc}, as well as stating and proving the generalization of Theorems \ref{thm:vojtawithy} and \ref{thm:abctovc} to the situation of starting from blowing up several points of one of the lines of $\pp^2$ (Proposition \ref{prop:extend}).

\section{Background on Heights, and $abc$ and Vojta's Conjecture}
Here we recall local and global heights, and state both Vojta's conjecture and $abc$ conjecture precisely.  We will also state Schmidt's subspace and its consequence which will be used in the proofs of the theorems.

Let $k$ be a number field and $M_k$ be the set of places of $k$.  For $k=\qq$, we have the usual absolute value $|\cdot |_\infty$ and the $p$-adic absolute value, normalized by  $|p|_p = \frac 1p$.  For each $v\in M_k$, we refer to an absolute value as \textbf{normalized} if it is in the equivalence class of $v$ and it restricts to a normalized absolute value on $\qq$.  We then define $|\cdot |_v$ to be the $\frac{[k_v:\qq_v]}{[k:\qq]}$-th power of the $v$-adic normalized absolute value.  We note that $|\cdot |_v$ may not always be an actual absolute value, but only a power of an absolute value.  This should not cause any problems in this paper, as we will not explicitly use triangular inequalities.  With these conventions, each element $x\in k^*$ satisfies the product formula $\prod_{v\in M_k} |x|_v = 1$.  We sometimes use the additive notations: $v(x) = -\log |x|_v$ is the \textbf{normalized additive valuation} for a place and $\vplus(x) = \max(0, v(x))$.

We define the height function on $\pp^n(k)$ by
\[
H([a_0:\cdots : a_n]) = \prod_{v\in M_k} \max(|a_0|_v, \ldots, |a_n|_v)
\]
and $h(P) = \log H(P)$.  This is well-defined because of the product formula, and this is in fact compatible with field extensions so that it is well-defined on $\pp^n(\overline \qq)$.  In particular, we define $H(a) = H([a:1])$ on $\pp^1$; we have $H(\frac ab) = \max(|a|, |b|)$ when $\frac ab\in\qq$ is in a reduced form.  More generally, given a projective variety $X$ defined over $k$ and a Cartier divisor $D$, we can define a Weil height function $h(D, -): X(k) \too \rr$ by $h(\phi_1(P)) - h(\phi_2(P))$, where $D = D_1 - D_2$ with each $D_i$ very ample and $\phi_i:X\incl \pp^{n_i}$ the associated embedding.  This is well-defined up to a bounded function, and in fact a linear equivalence also leaves height functions invariant up to a bounded function.  In addition, the height function satisfies functoriality with respect to pullbacks by morphisms: if $\phi: Y\too X$ is a morphism then $h(\phi^*D, P)$ is the same as $h(D, \phi(P))$ up to a bounded function.

We now introduce local height functions.  There are several ways of defining this, including defining through an integral model and defining through $M_k$-bounded functions.  Here we will not go into details; please refer to \cite{bomgub,serremw}.  Basically, given an effective Cartier divisor $D = \{(U_\alpha, f_\alpha)\}$, the local height function $\lambda_v(D, P)$ is roughly $\max(0, -\log |f_\alpha(P)|_v)$ for $P\in U_\alpha \setminus |D|$.  Therefore, the local height function is big whenever $P$ is $v$-adically close to $D$. To define it precisely, we need to glue these functions appropriately.  As an important example, when $D$ is defined by a homogeneous polynomial $F$ of degree $d$ in $\pp^n$, $\lambda_v(D, [x_0:\cdots :x_n])$ can be given by
\[
v(F(x_0,\ldots, x_n)) - d \min(v(x_0), \ldots, v(x_n)).
\]
Specializing to $\pp^1$, this means that $\lambda_v((0), [a:1]) = \vplus(a)$ for $a\in k^*$, so over $\qq$,  $\lambda_p((0), [a:1])$ is exactly the logarithm of the $p$-adic part of the numerator of $a$.  The local height function also satisfies functoriality with respect to pullbacks by morphisms, and it also satisfies the decomposition property:
\begin{equation}\label{eq:decomp}
h(D, P) = \sum_{v\in M_k} \lambda_v(D,P) + O(1).
\end{equation}
When $a\in k^*$ and $S\subset M_k$ is a finite set of places including all archimedean ones, we can now define the prime-to-$S$ part as
\[
|a|_S' = \exp\left(\sum_{v\notin S} \lambda_v((0), [a:1])\right).
\]
By the definition of local heights, this captures the prime-to-$S$ part of the numerator of $a$.  We also let $\mathcal O_S$ denote the set of $S$-integers of $k$, that is, the set of elements of $k$ whose $v$-adic absolute value is less than or equal to $1$ for all $v\notin S$.  Using our conventions for the local heights, we can also write this as
\[
\mathcal O_S = \{a\in k : \lambda_v((\infty),[a:1]) = 0 \quad \text{for all } v\notin S\}.
\]

For a non-archimedean place $v$, we can also define a truncated local height function.  To do this, let $\gamma_v$ be the minimum strictly-positive value of $v(x)$ as $x$ moves in $k^*$; when $k=\qq$, this is $\log p$ for the $p$-adic place.  We then define the \textbf{truncated local height function} (truncated at $1$) for an effective divisor $D$ by
\[
\trunloc(D, P) = \min\left(\lambda_v(D,P), \gamma_v\right).
\]
This only counts the minimum $v$-adic contribution of local height, even when $P$ is $v$-adically very close to $D$.  We can then define the \textbf{radical} of $a\in k^*$ by
\[
\rad(a) = \exp\left(\sum_{\substack{v\in M_k\\v \text{ non-arch.}}} \trunloc((0), [a:1])\right)
\]
Since this is defined using local heights, this is a real number and not necessarily an element of $k$.  It is clear that when $a\in \zz$, this agrees with the usual notion of the radical, which is the product of primes that divide $a$.   We are now ready to state the $abc$ conjecture. We first state it in the most well-known form.

\begin{conjecture}[$abc$ conjecture]\label{conj:abcQ}
Given $\epsilon>0$, there exists a constant $C>0$ such that whenever $a+b = c$ with $a,b,c\in\zz$ and $\gcd(a,b,c) = 1$, we have
\[
\rad(abc) > C \max(|a|,|b|,|c|)^{1-\epsilon}.
\]
\end{conjecture}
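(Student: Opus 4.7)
There is no proof to sketch: Conjecture~\ref{conj:abcQ} is the celebrated $abc$ conjecture of Masser and Oesterl\'e, one of the most famous open problems in Diophantine number theory. A proof has been announced by Mochizuki (as mentioned in the introduction), but it is neither verified nor reproduced by the present paper. The role of this statement here is purely as a reference: it enters as a hypothesis in Theorem~\ref{thm:abctovc} and as a conclusion in Theorem~\ref{thm:vctoabc}.

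What I would do next, in lieu of a proof, is recast the statement in the height-theoretic language just developed, both to match the conventions of the rest of the paper and to facilitate the eventual comparison with Vojta's conjecture. Concretely, $\max(|a|,|b|,|c|)$ becomes the height $H([a:c])$ of the point $[a:c]\in\pp^1(\qq)$, while $\log\rad(abc)$ becomes $\sum_{v\text{ non-arch}} \trunloc\bigl((0)+(1)+(\infty),\,[a:c]\bigr)$, and the coprimality hypothesis becomes $S$-integrality outside the archimedean place. This rewriting then suggests the natural $k$-rational generalization: for a number field $k$ and a finite set $S$ of places containing the archimedean ones, for every $\epsilon>0$ there should exist $C$ such that
\[
\sum_{v\notin S}\trunloc\bigl((0)+(1)+(\infty),P\bigr) > (1-\epsilon)\,h(P) - C
\]
for every $P\in\pp^1(k)\setminus\{0,1,\infty\}$.

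The only verification actually required for such a reformulation is the bookkeeping that Conjecture~\ref{conj:abcQ} is exactly the $k=\qq$, $S=\{\infty\}$ case of the new formulation --- a routine computation using the normalization of $|\cdot|_v$, the product formula, and the decomposition identity \eqref{eq:decomp}. The main obstacle, if one wanted to go further, is of course the conjecture itself; the classical equivalence between the $\qq$-version and the $k$-version is where any real Diophantine content would live, but that falls outside the scope of this paper.
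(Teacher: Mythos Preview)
Your assessment is correct and matches the paper's treatment: Conjecture~\ref{conj:abcQ} is stated without proof, and the paper immediately recasts it in the height-theoretic form you describe (as Conjecture~\ref{conj:abc}), noting that the $\qq$-case is recovered by taking $x=a/c$. Your remark that the only actual content here is the bookkeeping between the two formulations is exactly what the paper asserts with the phrase ``it is clear from the definitions.''
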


This says that whenever integers satisfy an additive relation, one cannot expect a nice multiplicative structure that they are all divisible by high powers of small primes.  Since we have introduced the notion of truncated local height functions, it is easy to state the number-field version:

\begin{conjecture}[$abc$ conjecture for number fields]\label{conj:abc}
Let $k$ be a number field and let $S$ be a finite set of places including all archimedean ones.  For $\epsilon >0$, there exists a constant $C$ such that for all $x\in \pp^1(k)$
\begin{equation}\label{ineq:abc}
\sum_{v\notin S} \left(\trunloc((0), x)+\trunloc((1), x)+\trunloc((\infty), x)\right) \ge (1-\epsilon) h(x) \,\,+\,\, C.
\end{equation}
\end{conjecture}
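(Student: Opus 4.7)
The plan must start with an honest caveat: Conjecture \ref{conj:abc} is the $abc$ conjecture for number fields, a famous open problem (Mochizuki's announced proof via inter-universal Teichm\"uller theory remains controversial). The present paper does not prove this conjecture; it states it in order to assume it in Theorem \ref{thm:abctovc} and to derive it from Vojta's conjecture in Theorem \ref{thm:vctoabc}. Any proof proposal is therefore necessarily aspirational, and my plan is to describe the natural strategic reductions that would have to be carried through.

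The first reduction is to try to deduce the statement over general $k$ from the classical rational case (Conjecture \ref{conj:abcQ}). The standard tool here is Belyi's theorem combined with functoriality of the truncated local height: given $x \in \pp^1(k)\setminus\{0,1,\infty\}$, one produces a Belyi map $\beta:\pp^1\to\pp^1$ of bounded degree (depending on $[k:\qq]$) ramified only over $\{0,1,\infty\}$ such that $\beta(x)\in\qq$, and then $\sum_{v\notin S}\trunloc((0),\beta(x))+\trunloc((1),\beta(x))+\trunloc((\infty),\beta(x))$ compares to the analogous sum for $x$ up to bounded loss. This step, while delicate, is widely believed to be tractable once the rational case is in hand; variants appear in the literature (Elkies, van Frankenhuijsen, Vojta).

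The second and genuinely hard step is the rational case itself. The natural attack is via Schmidt's subspace theorem applied to $\pp^1\setminus\{0,1,\infty\}$: the non-truncated version of the inequality, namely $\sum_{v\notin S}(\lambda_v((0),x)+\lambda_v((1),x)+\lambda_v((\infty),x))\ge(1-\epsilon)h(x)+O(1)$, follows from Roth's theorem and is a known consequence of the subspace theorem on the Diophantine side. The obstacle, and the entire source of $abc$'s depth, is the replacement of $\lambda_v$ by the truncated $\trunloc=\min(\lambda_v,\gamma_v)$. This truncation discards precisely the information about high-power divisibility at primes near $\{0,1,\infty\}$, and no known Diophantine-approximation technique controls it; in Nevanlinna theory the analogous truncation in the Second Main Theorem is available because of the algebraic-differential structure of holomorphic functions, which has no arithmetic counterpart in sight.

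The main obstacle, therefore, is the truncation level $1$ in the local heights: every currently available Diophantine input gives the untruncated inequality, and the gap to the truncated one is exactly the conjectural content of $abc$. Absent a genuinely new idea bridging truncated counting on the arithmetic side, the realistic plan, which is what the paper in fact adopts, is to take Conjecture \ref{conj:abc} as a black-box hypothesis and use it to establish the subsequent Vojta-type inequalities on rational surfaces.
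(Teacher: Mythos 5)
Your assessment is correct and matches the paper's treatment exactly: Conjecture \ref{conj:abc} is stated as a conjecture, not proved --- the paper only cites Mochizuki's announcement and otherwise uses it as a black-box hypothesis (in Theorem \ref{thm:abctovc} and Proposition \ref{prop:extend}) or as a target derived from Vojta's conjecture (in Theorem \ref{thm:vctoabc}). Your identification of the truncation in $\trunloc$ as the essential gap beyond what Roth/subspace-theorem methods give is also consistent with the paper's own remarks contrasting the truncated and untruncated inequalities on $\pp^1$.
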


The proof of this has been announced by Mochizuki \cite{mochizuki}.   By applying this conjecture for $k=\qq$ and $x = \frac ac$, it is clear from the definitions that we obtain Conjecture \ref{conj:abcQ}.

We now state Vojta's conjecture \cite[Main Conjecture (Conjecture 3.4.3)]{vojta}:

\begin{conjecture}[Vojta's Conjecture]\label{conj:vojta}
Let $X$ be a smooth projective variety over $k$, $K$ a canonical divisor of $X$, $A$ an ample divisor and $D$ a normal-crossings divisor. Fix height functions $\lambda_v(D, -)$, $h(K,-)$, and $h(A,-)$.  Let $S\subset M_k$ be a finite set of places.  Then given $\epsilon>0$, there exist a Zariski-closed $Z_\epsilon\subsetneq X$ and a constant $C$ such that
\begin{equation}\label{eq:vojtacon}
\sum_{v\in S} \lambda_v(D, P) + h(K, P) < \epsilon h(A, P) + C
\end{equation}
for all $P\in X(k)$ not on $Z_\epsilon$.
\end{conjecture}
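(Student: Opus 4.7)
The plan must begin by acknowledging that Conjecture \ref{conj:vojta}, as stated, is one of the deepest open problems of Diophantine geometry: it implies Mordell, Lang, and the $abc$ conjecture, so no wholesale proof is realistic. The sensible interpretation of the task is therefore to describe the general strategy by which special cases (such as the author's Theorems \ref{thm:vojtawithy}--\ref{thm:abctovc}) are actually established, and to identify what fails in passing to the general statement.

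The standard tool is Schmidt's subspace theorem. For a given $(X, D, K, A)$, the first step would be to fix an embedding $\phi:X \hookrightarrow \pp^N$ (supplied by some positive multiple of $A$) and to express each local height $\lambda_v(D, P)$ in terms of how $v$-adically small certain homogeneous forms are at $\phi(P)$. The second step would be to exploit normal crossings: near each stratum of $D$ the divisor looks like a union of coordinate hyperplanes, so there is hope of finding a family of linear forms whose joint vanishing accounts for the full $v$-adic contribution $\sum_{v\in S} \lambda_v(D, P)$. The third step would be to apply Schmidt's theorem to bound this sum by a small multiple of $h(A, P)$ outside of a finite union of hyperplane sections, which pulls back to the exceptional set $Z_\epsilon \subset X$; the canonical contribution $h(K, P)$ is then absorbed using the linear-equivalence relations between $K$, $D$, and $A$ on the variety at hand.

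I expect the principal obstacle to be the inability to decompose normal-crossings divisors cleanly into hyperplane sections for any single embedding, and, even when such a decomposition is available, the numerics of Schmidt's theorem (linear independence, archimedean diameters, the count of forms) rarely line up so as to produce $\epsilon h(A, P)$ rather than a larger multiple. A deeper difficulty, very visible in the present blowup situation, is that the optimal choice of auxiliary forms depends on the $v$-adic position of $P$ relative to the various exceptional divisors, so one must stratify the problem over $(P, v)$-types and account for all of them simultaneously. The author's idea of re-ordering the blowups for each pair $(P, v)$ and organizing the bookkeeping via Farey fractions is precisely the combinatorial device for doing this, and it is the step I would expect to be the hardest to discover independently; for the implications involving the $abc$ conjecture (Theorems \ref{thm:vctoabc} and \ref{thm:abctovc}), one further replaces, or is replaced by, the truncated Second Main Theorem input that $abc$ directly supplies on $\pp^1$.
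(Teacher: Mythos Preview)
The statement in question is a \emph{conjecture}, not a theorem: the paper does not prove Conjecture~\ref{conj:vojta} and makes no attempt to do so. There is therefore no ``paper's own proof'' against which to compare your proposal. You recognize this immediately and pivot to describing the strategy by which the paper establishes its special cases; that is the only reasonable thing to do, and your summary of the method---embedding, expressing local heights via explicit forms, applying Schmidt's subspace theorem (or the $abc$ input) after stratifying over $(P,v)$-types, and handling the combinatorics of exceptional divisors via the Stern--Brocot/Farey bookkeeping---is an accurate high-level account of what the paper does in the proofs of Theorems~\ref{thm:vojtawithy}--\ref{thm:abctovc}.

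One small correction of emphasis: in the paper's actual arguments for these surfaces, the canonical term $h(K,P)$ is not ``absorbed'' as a secondary step; rather, the explicit formulas \eqref{eq:canon_div} and \eqref{eq:div_intersect} show that $D_n + K_n$ telescopes to $-\pi^*(\Delta) + \pi^*(E_1)$ plus terms supported on exceptional divisors, and it is this cancellation---reducing Vojta's inequality to the concrete gcd-type inequality \eqref{eq:vc_intersect_detail} or \eqref{eq:vc_intersect2}---that makes the problem tractable in the first place. The Schmidt or $abc$ input is then applied to the resulting inequality, not to the original $\sum_{v\in S}\lambda_v(D,P)$ directly.
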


Here, normal-crossings divisors are assumed to be \textit{reduced} by definition.  Since local height functions for $S$ are big whenever the point is close to the divisor for places in $S$, \eqref{eq:vojtacon} says that rational points can get $v$-adically close to $D$  for $v\in S$ as much as there is negativity in $K$.  In this sense, this conjecture is a quantitative statement about how the geometry of the canonical divisor controls the arithmetic of existence of rational points close to a divisor.

Vojta's conjecture for $\pp^1$ is equivalent to the multi-place generalization of Roth's theorem.
Moreover, rearranging terms in \eqref{eq:vojtacon} for $D = (0) + (1) + (\infty)$ on $\pp^1$, we obtain
\[
(1-\epsilon) h(P) < \sum_{v\notin S} \left(\lambda_v((0), P) + \lambda_v((1), P) + \lambda_v((\infty), P)\right) \,\, + C'.
\]
We now see that the stronger form of Roth's theorem with the \textit{truncated} local functions in place of the usual local functions is precisely the $abc$ conjecture (Conjecture \ref{conj:abc}). It is a highly nontrivial result of Vojta \cite{vojtaabc} that the $abc$ conjecture follows from Vojta's conjecture (Conjecture \ref{conj:vojta}) on an arbitrarily-high dimensional varieties.

One known case of Vojta's conjecture is Schmidt's subspace theorem.  This is the case of $X = \pp^n$ and $D$ being the normal-crossings union of hyperplanes. We will present this theorem in the following form (cf. \cite[Theorem 7.2.2]{bomgub}):

\begin{theorem}[Schmidt's Subspace Theorem]\label{thm:schmidt}
Let $S\subset M_k$ be a finite set of places of $k$.  For each $v\in S$, assume that $L_{v,0},\ldots, L_{v,n}$ are linearly independent linear
forms in $(n+1)$ variables with coefficients in $k$.  Then given $\epsilon>0$, any $x = (x_0, \ldots, x_n)\in k^{n+1}\setminus \{(0,\ldots, 0)\}$ with
\[
\prod_{v\in S}\prod_{i=0}^n \frac{|L_{v,i}(x)|_v}{\max(|x_0|_v, \ldots, |x_n|_v)} < H(x)^{-n-1-\epsilon}
\]
is contained in finitely many linear subspaces.
\end{theorem}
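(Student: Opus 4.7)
The approach would follow the classical strategy of Schmidt, as streamlined in Bombieri--Gubler. Since this is a deep result whose full proof occupies many pages in standard references, I outline only the structural plan. The argument is indirect: assuming infinitely many solutions $x$ lie outside every finite union of proper linear subspaces of $\pp^n$, one constructs an auxiliary multi-homogeneous polynomial whose existence contradicts the geometry of numbers. After enlarging $k$ and $S$ to absorb the coefficients of the $L_{v,i}$, I would pass to the logarithmic (local-height) formulation of the hypothesis, so that the task becomes bounding $\sum_{v\in S}\sum_i \lambda_v(H_{v,i},x) - (n+1) h(x)$ from above for all but finitely many exceptional hyperplanes, where $H_{v,i}$ denotes the hyperplane cut out by $L_{v,i}$.

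The first main step is to apply Minkowski's theorem on successive minima (equivalently Mahler's parallelepiped lemma) to the $v$-adic convex body cut out by the $L_{v,i}$ at each $v\in S$. The hypothesis forces the successive minima to be highly unbalanced at each solution $x$, and extraction of a $v$-adic basis adapted to the Minkowski filtration yields, at each place, a hierarchy of subspaces describing where $x$ concentrates. A pigeonhole argument on the finite combinatorial data lets me pass to an infinite subsequence of solutions sharing a common ``type''---the same indexed filtration at every $v\in S$---so I may assume this type is fixed from here on.

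With the type fixed, I would invoke Siegel's lemma in the multi-homogeneous polynomial ring $k[X^{(1)},\ldots,X^{(m)}]$, with $m$ a large integer and each $X^{(j)}$ a block of $n+1$ variables, to produce an auxiliary polynomial $P$ of prescribed multi-degree, bounded height, and vanishing to high index along the product of dominant hyperplanes dictated by the type. Evaluating $P$ at a carefully chosen tuple $(x_1,\ldots,x_m)$ of solutions with rapidly increasing heights and applying the product formula yields an inequality which, if $P$ does not vanish to the expected order at this tuple, contradicts the standing assumption.

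The technical heart---and the main obstacle---is the non-vanishing input, namely Roth's lemma (or in more modern formulations, the Faltings product theorem or the Esnault--Viehweg Dyson-type lemma): one must show that a polynomial of controlled height and multi-degree cannot vanish to too high an index at a sufficiently generic tuple of rational points whose heights grow fast enough. This combinatorial-geometric statement is what makes the whole scheme work and accounts for most of the difficulty; everything else is an elaborate but systematic arrangement around it. Once Roth's lemma is available in the multi-variable form appropriate to the configuration chosen above, combining it with the auxiliary polynomial and the Minkowski extraction closes the argument by contradiction and yields the desired containment in finitely many proper linear subspaces.
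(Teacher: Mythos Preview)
The paper does not prove this statement at all: Schmidt's subspace theorem is quoted there as an external result, with a reference to \cite[Theorem 7.2.2]{bomgub}, and is used only as a black box (specifically in the proof of Proposition~\ref{prop:ridout} and in the proof of Proposition~\ref{prop:extend}). So there is no ``paper's own proof'' to compare your proposal against.

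Your outline is a fair high-level sketch of the classical Schmidt argument as presented in Bombieri--Gubler, and nothing in it is wrong as a plan. But for the purposes of this paper you should simply cite the theorem rather than attempt to reprove it; that is exactly what the author does.
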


We end this section by mentioning the following result. This is actually an immediate consequence of Theorem \ref{thm:schmidt} for $n=1$, but the exact statement and its proof do not seem to be in the literature, so we include the short proof for convenience.

\begin{proposition}\label{prop:ridout}
Let $S$ be a finite set of places of $k$.  Given $\epsilon>0$, there exists a constant $C$ such that
\[
\sum_{v\in S} \vplus(a-1) < \epsilon h(a) + \sum_{v\notin S} |v(a)| + C
\]
for all $a\in k\setminus \{0, 1\}$.
\end{proposition}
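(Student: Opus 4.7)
My plan is to reduce the stated inequality to Roth's theorem for $\pp^1$ with three targets $0, 1, \infty$, and then derive this from Theorem \ref{thm:schmidt} by a short partition argument.

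\emph{Reduction.} Using the product formula $\sum_{v \in M_k} \vplus(a) = \sum_{v \in M_k} \vplus(1/a) = h(a)$, one has
\[
\sum_{v \notin S} |v(a)| \;=\; \sum_{v\notin S}\bigl(\vplus(a) + \vplus(1/a)\bigr) \;=\; 2h(a) - \sum_{v \in S}\bigl(\vplus(a) + \vplus(1/a)\bigr),
\]
so the desired inequality is equivalent to
\begin{equation}\label{eq:plan3pt}
\sum_{v\in S}\bigl(\vplus(a-1) + \vplus(a) + \vplus(1/a)\bigr) \;\le\; (2+\epsilon)\,h(a) + O(1).
\end{equation}
Since $\vplus(a) = \lambda_v((0), [a{:}1])$, $\vplus(1/a) = \lambda_v((\infty), [a{:}1])$, and $\vplus(a-1) = \lambda_v((1), [a{:}1]) + O(1)$ at each place, (\ref{eq:plan3pt}) is the Roth-type statement $\sum_{v\in S}\lambda_v((0)+(1)+(\infty), [a{:}1]) \le (2+\epsilon)\,h(a) + O(1)$.

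\emph{Proof of (\ref{eq:plan3pt}).} For each function $j \colon S \to \{0, 1, \infty\}$ (of which there are $3^{|S|}$), apply Theorem \ref{thm:schmidt} to $x = (a, 1) \in k^2$ using, at each $v \in S$, the two linearly independent forms in $\{X_0,\, X_1,\, X_0 - X_1\}$ whose zero divisors are $\{(0), (1), (\infty)\} \setminus \{(j(v))\}$. Schmidt gives, outside a finite set $E_j \subset \pp^1(k)$,
\[
\sum_{v\in S}\bigl(\lambda_v(D^{(1)}_v, [a{:}1]) + \lambda_v(D^{(2)}_v, [a{:}1])\bigr) \;\le\; (2+\epsilon)\,h(a) + O(1),
\]
where $D^{(1)}_v, D^{(2)}_v$ denote the chosen divisors. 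For any $a \notin \{0,1\} \cup \bigcup_j E_j$, choose $j^*$ so that $j^*(v)$ labels a divisor that is \emph{not} $v$-adically closest to $a$; such a $j^*$ exists because at any place $v$ at most one of $(0), (1), (\infty)$ can be $v$-adically near $a$ (ultrametric inequality for non-archimedean $v$; bounded pairwise distance among the three points for archimedean $v$). Then $\lambda_v((j^*(v)), [a{:}1]) = O(1)$ at each $v\in S$, so adding these bounded terms to the Schmidt bound for $j = j^*$ yields (\ref{eq:plan3pt}). The finitely many excluded $a$ contribute only a bounded error absorbed into $C$.

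The only step requiring thought is the reduction: the algebraic rearrangement using the product formula that converts the one-sided inequality of the proposition into the symmetric three-target form (\ref{eq:plan3pt}). After that, the adaptive two-form Schmidt application is standard, and the discrepancy between $\vplus$ and $\lambda_v$ at the finitely many archimedean places is routinely absorbed into $O(1)$.
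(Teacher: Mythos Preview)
Your argument is correct. Both your proof and the paper's proof rest on the same engine---Schmidt's subspace theorem for $\pp^1$ (i.e.\ the multi-place Roth theorem)---together with a partition of $S$ to select the right linear forms at each place; the difference is in the packaging. The paper applies Schmidt directly to $x=(1/a,1)$ with the two forms $\{X,Y\}$ or $\{X-Y,Y\}$ at each $v\in S$ (so $2^{|S|}$ cases indexed by $T\subset S$), and then uses the product formula to produce the term $\sum_{v\notin S}|v(a)|$ on the right-hand side. You instead first use the identity $\sum_{v\notin S}|v(a)| = 2h(a) - \sum_{v\in S}\bigl(\vplus(a)+\vplus(1/a)\bigr)$ to recast the proposition as the symmetric three-point Roth inequality $\sum_{v\in S}\lambda_v\bigl((0)+(1)+(\infty),[a{:}1]\bigr)\le (2+\epsilon)h(a)+O(1)$, and then run the standard $3^{|S|}$-case partition argument. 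Your route has the conceptual payoff of identifying the proposition as literally equivalent to three-point Roth on $\pp^1$; the paper's route is slightly leaner (fewer cases, no preliminary rewriting). Either way the exceptional sets are finite and absorbed into the constant, and the archimedean discrepancies between $\vplus$ and $\lambda_v$ are harmless $O(1)$ terms, as you note.
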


In particular, when $a\in \zz$, the $S$-part of $a-1$ is roughly at most the prime-to-$S$ part of $a$.  Moreover, even when $a$ has some denominators, this says that the $S$-part of the numerator of $a-1$ is roughly at worst  the prime-to-$S$ part of the product of the numerator and the denominator of $a$.

\begin{proof}
Let $T$ be a subset of $S$.  We apply Schmidt's subspace theorem for two variables (so really a multi-place version of Roth's theorem) with linear forms $X-Y$ and $Y$ for $v\in T$ and linear forms $X$ and $Y$  for $v\in S\setminus T$.  We then plug in $(\frac 1a, 1)$:
\[
\frac{\displaystyle \left(\prod_{v\in T} |a-1|_v\right)\times \left(\prod_{v\in S} \left| \frac 1a \right|_v\right)}{\displaystyle \prod_{v\in S} \max\left(\left| \frac 1a \right|_v, 1 \right)^2} > H(a)^{-2-\epsilon}.
\]
Since $H(a) = H(1/a) = \prod_{v\in M_k} \max(|1/a|_v, 1)$, it follows from the product formula that
\[
\prod_{v\in T} |a-1|_v > \left(\prod_{v\notin S} \left| \frac 1a \right|_v \right) \times \left( \prod_{v\notin S} \max\left(\left| \frac 1a\right|_v, \,\,  1\right)^{-2}\right) \times H(a)^{-\epsilon}
\]
For each $v\notin S$, the contribution from the right-hand side is $|a|_v$ when $|a|_v \le 1$ and is $1/|a|_v$ when $|a|_v > 1$.  We have finitely many exceptions for each $T$, so the total number of exceptions is still finite as $T$ moves among all subsets of $S$.  By using $T = \{v\in S: |a-1|_v < 1\}$ for each $a$ and taking $-\log$ of both sides, we obtain the proposition, accommodating all the exceptions by adjusting the constant $C$.
\end{proof}

\section{Stern--Brocot Tree}\label{sec:farey}

In this section, we first recall the definition of the Stern--Brocot tree and its basic properties.  We then prove a property which will be crucial in our proof of Vojta's conjecture on rational surfaces.

\begin{definition}
The \textbf{mediant} of two fractions $\frac ab$ and $\frac cd$ is $\frac{a+b}{c+d}$.  The (left-side of) the \textbf{Stern--Brocot tree} is constructed as follows: we begin with $\frac 01$ and $\frac 11$ as the level $1$ elements of the tree; we then inductively construct elements of level $k+1$ by inserting the mediants of every neighboring elements of level $k$.  The fractions in the first four levels of the tree are depicted in the picture on the next page ($\bullet$ and the line segments will be explained later).
\end{definition}

\begin{figure}[h]
\[
\xymatrix@=1pt{\frac 01 &&&&&&&&&&&&&&&& \bullet \ar@{-}[lllllllldddd]\ar@{-}[rrrrrrrrdddd] &&&&&&&&&&&&&&&& \frac 11 &&&\text{level } 1\\
& \\ & \\ & \\
\frac 01 &&&&&&&& \bullet \ar@{-}[lllldddd]\ar@{-}[rrrrdddd] &&&&&&&& \frac 12 &&&&&&&& \bullet
\ar@{-}[lllldddd]\ar@{-}[rrrrdddd] &&&&&&&& \frac 11
&&&\text{level }2\\
& \\ & \\ & \\
 \frac 01 &&&& \bullet \ar@{-}[lldddd]\ar@{-}[rrdddd] &&&& \frac 13 &&&& \bullet \ar@{-}[lldddd]\ar@{-}[rrdddd] &&&&
\frac 12 &&&& \bullet \ar@{-}[lldddd]\ar@{-}[rrdddd] &&&& \frac 23 &&&& \bullet \ar@{-}[lldddd]\ar@{-}[rrdddd] &&&& \frac 11 &&&\text{level }3\\
& \\ & \\ & \\
\frac 01 && \bullet \ar@{-}[ldddd]\ar@{-}[rdddd] && \frac 14 && \bullet \ar@{-}[ldddd]\ar@{-}[rdddd] && \frac 13 &&
\bullet \ar@{-}[ldddd]\ar@{-}[rdddd] && \frac 25 && \bullet \ar@{-}[ldddd]\ar@{-}[rdddd] && \frac 12 && \bullet
\ar@{-}[ldddd]\ar@{-}[rdddd] && \frac 35 && \bullet \ar@{-}[ldddd]\ar@{-}[rdddd] &&
\frac 23 && \bullet \ar@{-}[ldddd]\ar@{-}[rdddd] && \frac 34 && \bullet \ar@{-}[ldddd]\ar@{-}[rdddd] && \frac 11 &&&\text{level }4\\
& \\ & \\ & \\
 &&&&&&&&&&&&&&&&&&&&&&&&&&&&&&&&&&&&&&   }
\]
\caption{Stern--Brocot Tree}
\label{fig:sbtree}
\end{figure}
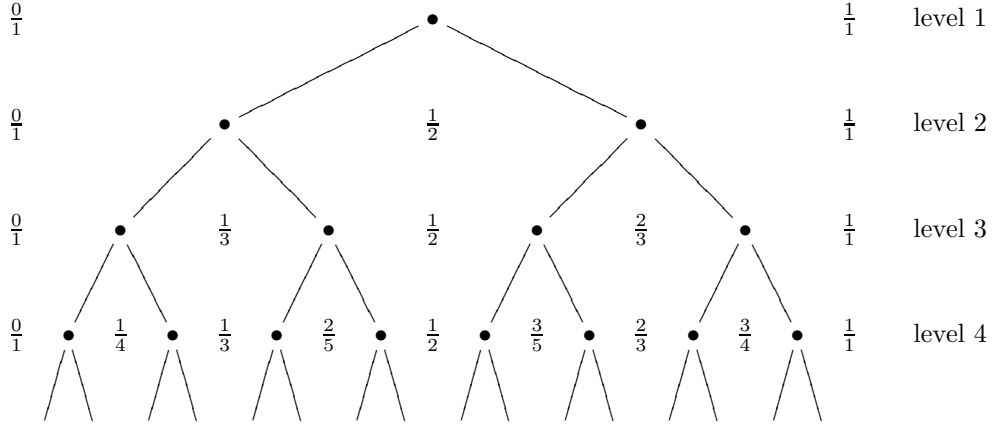

The usual Stern--Brocot tree actually starts with $\frac 11$ serving as the root in the middle of $\frac 01$ and $\frac 10$, and proceeds by inserting mediants of neighbors at each level.  In this paper, we will only need the fractions less than or equal to $1$.  The Stern--Brocot tree often appears as an efficient way of constructing Farey sequences, which have been important in various fields, including binary search algorithms and the circle method in analytic number theory.

We now quickly list several properties of the Stern--Brocot tree; for more details see for example \cite{graham_knuth}. Every new mediant is a reduced fraction.  At level $n$, the interval $[0,1]$ is subdivided into $2^{n-1}$ subintervals, so we have $2^{n-1} + 1$ fractions in level $n$.  Every reduced fraction $\frac ab$ between $0$ and $1$ appears by level $b$.  If $\frac ab$ and $\frac cd$ are neighbors at some level, then we have $|bc - ad | = 1$.

Before proving the main result of this section, we will introduce the following notation.

\begin{definition}
Given a natural number $n$ and $x\in [0,1]$, we define the \textbf{Farey interval} of $x$ at level $n$ to be the subinterval of $[0,1]$ in level $n$ of the Stern--Brocot tree that contains $x$.  We let $\mathbf{a_n(x)}$, $\mathbf{b_n(x)}$, $\mathbf{c_n(x)}$, and $\mathbf{d_n(x)}$  denote respectively the numerator of the initial point, the denominator of the initial point, the numerator of the end point, and the denominator of the end point of the Farey interval of $x$ at level $n$.  We do not define any of these notions when $x$ itself appears at level $n$ of the tree.
\end{definition}

We are now ready to prove the following key result.

\begin{theorem}\label{thm:farey}
Suppose that $\alpha \in \qq \cap (0,1)$ appears in the Stern--Brocot tree for the first time in level $n+1$ for some natural number $n$.  Then (the closure of) the graph of the real-valued function $\varphi_\alpha$ on $I_\alpha = \displaystyle \left(\frac{a_n(\alpha)}{b_n(\alpha)}, \,\, \frac{c_n(\alpha)}{d_n(\alpha)}\right)$ defined by
\begin{equation}\label{eq:farey_func}
\varphi_\alpha(x) = \sum_{i=1}^n \min \left(b_i(\alpha)\cdot  x - a_i(\alpha), \,\, c_i(\alpha) - d_i(\alpha) \cdot x\right)
\end{equation}
consists of two line segments connecting
\begin{align*}
&\left(\frac{a_n(\alpha)}{b_n(\alpha)}, \frac{b_n(\alpha) - 1}{b_n(\alpha)}\right), \,\, \left(\frac{a_n(\alpha) + c_n(\alpha)}{b_n(\alpha) + d_n(\alpha)}, \frac{b_n(\alpha) + d_n(\alpha) - 1}{b_n(\alpha) + d_n(\alpha)}\right),  \,\,\left(\frac{c_n(\alpha)}{d_n(\alpha)}, \frac{d_n(\alpha) - 1}{d_n(\alpha)}\right)
\end{align*}
in this order.  In particular, the maximum of $\varphi_\alpha$ on $I_\alpha$ occurs at $\alpha = \frac{a_n(\alpha)+c_n(\alpha)}{b_n(\alpha) + d_n(\alpha)}$.
\end{theorem}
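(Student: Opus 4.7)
The plan is to exploit the tent structure of the individual summands. For each $i\le n$, the function
\[
\psi_i(x) := \min\bigl(b_i(\alpha)\cdot x - a_i(\alpha),\ c_i(\alpha) - d_i(\alpha)\cdot x\bigr)
\]
is a tent function on the level-$i$ Farey interval $(a_i(\alpha)/b_i(\alpha),\, c_i(\alpha)/d_i(\alpha))$ of $\alpha$: it vanishes at the two endpoints and peaks at the mediant $(a_i(\alpha)+c_i(\alpha))/(b_i(\alpha)+d_i(\alpha))$ with height $1/(b_i(\alpha)+d_i(\alpha))$, by the Farey neighbor identity $b_i c_i - a_i d_i = 1$.

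My first observation is that the Farey intervals at levels $1, 2, \dots, n$ are nested, and at each step the mediant of the level-$i$ interval becomes one endpoint of the level-$(i+1)$ interval. Consequently, for every $i<n$ the interval $I_\alpha$ lies entirely to one side of the level-$i$ mediant, so $\psi_i$ restricts to a single linear function on $I_\alpha$ (with slope $b_i(\alpha)$ or $-d_i(\alpha)$). Only $\psi_n$ retains a genuine tent shape on $I_\alpha$, since its apex is the newly inserted mediant $\alpha$ at level $n+1$. Summing, $\varphi_\alpha$ is automatically piecewise linear on $I_\alpha$ with a single kink at $x=\alpha$, so its graph must consist of two line segments meeting above $\alpha$.

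To identify the three vertices I would induct on $n$. The base case $n=1$ is $\varphi_\alpha(x)=\min(x,1-x)$ on $(0,1)$ with $\alpha=1/2$, which matches the formula directly. For the inductive step, let $\beta:=(a_{n-1}(\alpha)+c_{n-1}(\alpha))/(b_{n-1}(\alpha)+d_{n-1}(\alpha))$ be the point inserted into the level-$(n-1)$ Farey interval of $\alpha$ at level $n$. Then $\beta$ first appears at level $n$ and shares $(a_i, b_i, c_i, d_i)$ with $\alpha$ for $i\le n-1$, so the inductive hypothesis applied to $\beta$ describes $\varphi_\beta = \sum_{i=1}^{n-1}\psi_i$ as piecewise linear with kink at $\beta$ on all of $I_\alpha^{(n-1)}$. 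Writing $\varphi_\alpha = \varphi_\beta + \psi_n$ and restricting to $I_\alpha\subset I_\alpha^{(n-1)}$ (on which $\varphi_\beta$ is one of its two linear pieces and $\psi_n$ is a tent vanishing at the endpoints), the claimed endpoint values for $\varphi_\alpha$ follow immediately from those of $\varphi_\beta$.

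The only real calculation is the apex value $\varphi_\alpha(\alpha) = \varphi_\beta(\alpha) + 1/(b_n(\alpha)+d_n(\alpha))$. Using $b_n c_n - a_n d_n = 1$ one computes $\alpha - a_n/b_n = 1/(b_n(b_n+d_n))$ and $c_n/d_n - a_n/b_n = 1/(b_n d_n)$, giving the interpolation weight $d_n/(b_n+d_n)$; linear interpolation of the relevant linear piece of $\varphi_\beta$ between the inductively known endpoint values $(b_n-1)/b_n$ and $(d_n-1)/d_n$ then yields $\varphi_\beta(\alpha) = (b_n+d_n-2)/(b_n+d_n)$, and adding $1/(b_n+d_n)$ produces the claimed $(b_n+d_n-1)/(b_n+d_n)$. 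The final assertion about the location of the maximum is then immediate, since both segments are monotone: the endpoint values $1-1/b_n$ and $1-1/d_n$ are strictly less than the apex value $1-1/(b_n+d_n)$. The only real bookkeeping nuisance is the two symmetric sub-cases depending on whether $\alpha$ lies to the left or right of $\beta$, but they are handled identically by exchanging the roles of $(a_{n-1},b_{n-1})$ and $(c_{n-1},d_{n-1})$.
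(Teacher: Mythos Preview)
Your proof is correct and follows essentially the same route as the paper's: both argue by induction on the level, treat the base case $\alpha=\tfrac12$ directly, and in the inductive step write $\varphi_\alpha$ as the partial sum through level $n-1$ (which the inductive hypothesis identifies as a single line segment on $I_\alpha$ with the stated endpoint heights) plus the final tent $\psi_n$, then compute the apex value via the Farey identity $b_nc_n-a_nd_n=1$. Your explicit naming of $\beta$ and the linear-interpolation bookkeeping are slightly more spelled out than the paper's computation, but the argument is the same.
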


We note that the functions $a_i$, $b_i$, $c_i$, and $d_i$ are all constant on $I_\alpha$.  Therefore, we can also define $\varphi_\alpha(x)$ by replacing all appearances of $\alpha$ on the right-hand side of \eqref{eq:farey_func} by $x$.  Since $b_n(\alpha) + d_n(\alpha)$ is the denominator of $\alpha$, it follows from this theorem that the maximum value of $\varphi_\alpha$ on $I_\alpha$ is $\frac{\mathrm{denom}(\alpha)-1}{\mathrm{denom}(\alpha)}$.

\begin{proof}
We prove this by induction.   When $n=1$, $\alpha = \frac 12$, and $\varphi_\alpha(x) = \min(x, 1-x)$, so the theorem is obvious.  Now assume that we have proved the statement for $n$, and assume that $\alpha$ appears for the first time in level $n+2$.  To alleviate notations, let $a = a_{n+1}$, and let $b, c, d$ be similarly defined.  In particular, we have $\alpha = \frac{a+ c}{b + d}$.  We note that from the construction of the Stern--Brocot tree, exactly one of $\frac ab$ or $\frac cd$ is also an endpoint at level $n$, while the other is created as a mediant of the endpoints at level $n$.  Therefore, it follows from the inductive hypothesis that the graph of the summation up to $n$ in $\varphi_\alpha$ is a single line segment connecting $(\frac ab, \frac{b-1}b)$ to $(\frac cd, \frac {d-1}d)$.   The contribution to $\varphi_\alpha$ at level $n+1$ is given by
\[
\min\left(b \cdot x - a, \,\, c - d \cdot x\right),
\]
and this is a $\wedge$-shaped function which has the value $0$ at the two end points $\frac{a}{b}$ and $\frac cd$ with the peak at the mediant $\alpha = \frac {a+c}{b+d}$.  Therefore, it immediately follows that $\varphi_\alpha$ again consists of two line segments, the first starting at $(\frac ab, \frac{b-1}b)$ and the second ending at $(\frac cd, \frac {d-1}d)$.  The $y$-coordinate of the point connecting the two segments can be computed as follows, using the fact that $bc-ad = 1$:
\begin{align*}
&\left[\frac{\frac{d-1}d - \frac{b-1}b}{\frac cd - \frac ab} \left(\alpha - \frac ab\right) + \left(1-\frac 1b\right)\right] + \left(b\alpha -a\right)\\
&=(d-b)\left(\alpha - \frac ab\right) + \left(1-\frac 1b\right) + \left(b\alpha -a\right)\\
&=d\left(\alpha - \frac ab\right) + 1-\frac 1b\\
&=d\left(\frac{a+c}{b+d} - \frac ab\right) + 1 - \frac 1b = d\cdot \frac{bc-da}{(b+d)b} + 1 - \frac{b+d}{(b+d)b}\\
&= 1 - \frac 1{b+d}.
\end{align*}
Therefore, the statement also holds for $n+1$ and the induction continues.
\end{proof}

\section{Some Geometric Computations on Multiple Blowups of $\pp^2$}\label{sec:geom}

In this section, we prove some geometric results which will be useful in proving the main theorems of the paper.  We first recall two propositions proved in \cite{yasufuku2}, which we will again use in this article.  The first computes local heights of exceptional divisors, and the second is how the height with respect to an ample divisor on the base is related to the height with respect to an ample divisor on the blowup.

\begin{proposition}\label{prop:blowupht}
Let $X$ be a smooth projective variety over $k$, and let $Q\in X(k)$.  Let $U$ be an open affine
neighborhood of $Q$, and let $x_1,\ldots, x_n \in \scr O_X(U)$ define $Q$.  Then the local height on the blowup $\widetilde X$ of $X$
at $Q$ with respect to the exceptional divisor $E$ can be given by
\[
\lambda_v(E, P) = \max(0, \min_j v(x_j(P)),
\]
where we identify $\widetilde X\setminus E$ with $X\setminus Q$, and where we extend each $x_j$ as a rational function on
$X$, interpreting $x_j(P)$ to be $\infty$ whenever $x_j$ is undefined.
\end{proposition}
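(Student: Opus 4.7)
The plan is to exhibit the proposed formula as a valid local height function for $E$ by unpacking the blowup construction on its standard charts and then invoking the definition via the $\max(0,\cdot)$ convention recalled in the paper. Restricting $\pi:\widetilde X\to X$ over $U$, the preimage $\pi^{-1}(U)$ is covered by the affine blowup charts $V_j=\spec\scr O_X(U)[x_i/x_j:i\ne j]$ for $j=1,\ldots,n$. On each $V_j$, the exceptional divisor $E$ is the Cartier divisor cut out by the regular function $x_j$, since the relation $x_i=x_j\cdot(x_i/x_j)$ on $V_j$ makes the ideal $(x_1,\ldots,x_n)$ principal, generated by $x_j$. Away from $\pi^{-1}(U)$, $\pi$ is an isomorphism onto $X\setminus Q$ and $E$ does not meet this open. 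With the $\max(0,\cdot)$ convention, $\lambda_v(E,P)$ is determined up to an $M_k$-bounded function by the requirements $\lambda_v(E,P)=\max(0,v(x_j(P)))+O_v(1)$ on $V_j$ and $\lambda_v(E,P)=O_v(1)$ on any open away from $E$; what has to be checked is that the proposed formula matches these local expressions up to $M_k$-bounded corrections.

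The key observation is that every $P\in\widetilde X(k_v)\setminus E$ lies in some region $R_j(v)=\{P':v(x_j(P'))\le v(x_i(P'))\text{ for all }i\}$, using the convention $v(x_i(P'))=-\infty$ when $x_i$ is undefined at $P'$. Indeed, for $P\in\pi^{-1}(U)(k_v)\setminus E$, the tuple $(x_1(P),\ldots,x_n(P))$ is nonzero in $k_v^n$, and choosing $j$ that achieves $\min_i v(x_i(P))$ places $P$ in $R_j(v)$; equivalently $v((x_i/x_j)(P))\ge 0$ for all $i$, so $P$ lies in the $v$-adic integral part of $V_j$. On this integral part, $\min_i v(x_i(P))=v(x_j(P))$, so the proposed formula $\max(0,\min_i v(x_i(P)))$ reduces to $\max(0,v(x_j(P)))$, matching $\lambda_v(E,P)$ modulo $O_v(1)$. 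For $P\in\widetilde X(k_v)\setminus\pi^{-1}(U)$, either some $x_i$ has a pole at $P$ (forcing $\min_iv(x_i(P))=-\infty$) or all $x_i(P)$ stay bounded away from $0$ and $\infty$, so $\max(0,\min_i v(x_i(P)))$ is either $0$ or $M_k$-bounded, consistent with $\lambda_v(E,P)=O_v(1)$ since $P$ is bounded away from $E$ in this case.

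The main subtlety is to ensure that the $O_v(1)$ corrections collected from the various charts assemble into a single $M_k$-bounded function as $P$ and $v$ vary. This is routine $M_k$-constants bookkeeping on the finite affine cover $\{V_j\}_{j=1}^n\cup(\widetilde X\setminus E)$ of the projective variety $\widetilde X$; projectivity guarantees compactness of $\widetilde X(k_v)$ for every $v$ and hence uniformity of the corrections across the cover, so no conceptual difficulty arises beyond the covering argument above.
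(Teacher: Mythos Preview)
The paper does not supply its own proof of this proposition: it is explicitly \emph{recalled} from \cite{yasufuku2} at the start of Section~\ref{sec:geom}, with no argument given. So there is nothing in the present paper to compare your attempt against.

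On its own terms, your argument follows the expected route and is essentially correct. Passing to the standard blowup charts $V_j=\spec\scr O_X(U)[x_i/x_j:i\ne j]$, identifying $x_j$ as a local equation for $E$ on $V_j$, and then observing that any $P\notin E$ lands in the $v$-integral locus of the chart $V_j$ with $j$ minimizing $v(x_j(P))$ is exactly how one verifies that $\max(0,\min_j v(x_j(P)))$ represents $\lambda_v(E,-)$. One small point: your dichotomy for $P\notin\pi^{-1}(U)$ (``either some $x_i$ has a pole, or all $x_i(P)$ stay bounded away from $0$ and $\infty$'') tacitly assumes that the rational functions $x_1,\ldots,x_n$ have no further common zero on $X$ outside $U$. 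This is harmless in all the applications in the paper (where the $x_j$ are affine coordinates on a standard chart of $\pp^2$ and acquire poles on its complement), and in general is handled exactly as you indicate, by the routine $M_k$-constants bookkeeping on a finite cover; but as written that clause is asserted rather than argued.
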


\begin{proposition}\label{prop:ample}
Let $X$ be a projective scheme over $k$ with an ample divisor $A$, and let $\scr I$ define a closed subscheme.  Let $\pi: \widetilde X\too X$ be
the blowup of $X$ along $\scr I$. Then there exists a very ample divisor $A'$ on $\widetilde X$ such
that
\[
h_X(A, \pi(P)) \le h_{\widetilde X}(A', P) + O(1)
\]
for all $P\in \widetilde X(k)$.  Moreover, there is a constant $C$ such that
\[
h_{\widetilde X}(A', P) \le C \cdot h_X(A, \pi(P)) \qquad \qquad \text{for all } P\in \widetilde X(k)
\]
\end{proposition}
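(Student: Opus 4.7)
The plan is to choose $A'$ explicitly as $N\pi^*A - E$ for $N$ sufficiently large, where $E$ is the exceptional Cartier divisor of the blowup $\pi$, and then to extract both inequalities from the linear equivalence $A' + E \sim N\pi^*A$ combined with the functoriality of heights.

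The first step is to verify that $A'$ can be taken very ample. Since the blowup admits the relative Proj presentation $\widetilde X = \proj \bigoplus_{n\ge 0} \scr I^n$, the line bundle $\scr O_{\widetilde X}(-E)$ coincides with the canonical twisting sheaf $\scr O_{\widetilde X}(1)$ of this Proj and is therefore $\pi$-relatively very ample. Combined with the ampleness of $A$ on $X$, a standard projectivity result (Hartshorne, Theorem II.7.10) guarantees that $N\pi^*A - E$ is very ample on $\widetilde X$ for all $N$ sufficiently large; fix such an $N$ and set $A' := N\pi^*A - E$.

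With this choice, functoriality of heights under the morphism $\pi$ gives $h_{\widetilde X}(\pi^*A, P) = h_X(A, \pi(P)) + O(1)$, so the linear equivalence $A' + E \sim N\pi^*A$ together with the additivity of Weil heights on linear combinations of divisors yields the master identity
\[
h_{\widetilde X}(A', P) + h_{\widetilde X}(E, P) = N\, h_X(A, \pi(P)) + O(1).
\]
Since $E$ is effective, $h_{\widetilde X}(E, P)$ is bounded below on $\widetilde X(k)$; the master identity then produces the second inequality $h_{\widetilde X}(A', P) \le N\, h_X(A, \pi(P)) + O(1)$, from which the stated form follows after the standard renormalization of heights to be nonnegative (which absorbs the additive $O(1)$ into the multiplicative constant $C = N + O(1)$). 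For the first inequality, I would combine the same identity with the lower-boundedness of $h_X(A, -)$ on $X(k)$ (which holds because $A$ is ample on the projective scheme $X$) to deduce
\[
h_{\widetilde X}(A', P) \ge N\, h_X(A, \pi(P)) + O(1) \ge h_X(A, \pi(P)) + O(1),
\]
as claimed.

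The main obstacle in this plan is the geometric input of the first step, namely establishing the very ampleness of $N\pi^*A - E$ for large $N$, which requires invoking the relative version of ampleness for the blowup morphism. The remaining height manipulations are routine consequences of functoriality and the bounded-below property of Weil heights associated to effective divisors.
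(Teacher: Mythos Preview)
The paper does not actually prove this proposition; it is quoted without proof from the author's earlier article \cite{yasufuku2}, so there is no in-paper argument to compare against. Your overall strategy---take $A'=N\pi^*A-E$, establish very ampleness via relative ampleness of $\scr O_{\widetilde X}(-E)$ together with Hartshorne II.7.10, and then read both inequalities off the identity $h_{\widetilde X}(A',P)+h_{\widetilde X}(E,P)=N\,h_X(A,\pi(P))+O(1)$---is the standard one, and your derivation of the upper bound $h_{\widetilde X}(A',P)\le N\,h_X(A,\pi(P))+O(1)$ from effectiveness of $E$ is correct.

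There is, however, a genuine gap in your argument for the \emph{first} inequality. You assert
\[
h_{\widetilde X}(A',P)\ \ge\ N\,h_X(A,\pi(P))+O(1),
\]
but via the master identity this is equivalent to $h_{\widetilde X}(E,P)\le O(1)$, i.e.\ to $h_{\widetilde X}(E,-)$ being bounded \emph{above}, and that is false. For a concrete counterexample, let $\pi$ be the blowup of $\pp^2$ at $[0:0:1]$, realized inside $\pp^2\times\pp^1$ by $XV=YU$; then $E\sim\pi^*H-p_2^*\scr O_{\pp^1}(1)$, so for $P_n=([1:0:n],[1:0])$ one has $h(E,P_n)=\log n+O(1)\to\infty$. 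Correspondingly $h(A',P_n)=(N-1)\log n+O(1)$, which is strictly smaller than $N\,h_X(A,\pi(P_n))=N\log n$ for large $n$. The lower-boundedness of $h_X(A,-)$ that you invoke does not help here; it only justifies the second $\ge$ in your display, not the first.

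The repair is minor. Enlarge $N$ so that $(N-1)\pi^*A-E$ is also very ample (the same Hartshorne argument gives this for all sufficiently large $N$). Then $A'-\pi^*A=(N-1)\pi^*A-E$ is effective, hence $h_{\widetilde X}(A'-\pi^*A,P)\ge O(1)$, and functoriality yields directly
\[
h_{\widetilde X}(A',P)\ \ge\ h_{\widetilde X}(\pi^*A,P)+O(1)\ =\ h_X(A,\pi(P))+O(1),
\]
which is exactly the first inequality.
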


For a non-archimedean $v$, $\vplus(\alpha)$ measures the logarithm of the $v$-adic part of the numerator of $\alpha$, so $\lambda_v(E,P)$ can be thought of as the logarithm of the $v$-adic part of the GCD of the numerators of $x_1(P), \ldots, x_n(P)$.  Therefore, we will sometimes use the notation $\gcdpv(x_1(P),\ldots, x_n(P))$ in place of $\max(0, \min_j v(x_j(P))$, even for an archimedean $v$.

For the rest of this section, we will be considering the following situation.  Choosing coordinates on $\pp^2$, we let $\Delta$ be the divisor $XYZ = 0$, and let the first blowup occur at the point $[1:0:1]$.  This is on $Y=0$, but not on $XZ = 0$, and let us denote the line $Y=0$ by $L$.   As we continue the blowup process, we denote by $X_i$ the space created after the $i$-th blowup, and $E_i$ the exceptional divisor created at the $i$-th blowup inside $X_i$.  In particular, $E_1$ is the first exceptional divisor created over $[1:0:1]$. To alleviate notations as much as possible, we will denote by $\sim$ above a divisor to indicate the strict transform of the divisor to the space currently under consideration (usually the $n$-th blowup $X_n$).  Since we will not vary the space in question, the notation without specifying $n$ should not cause any confusion.  Moreover, we will simply denote by $\pi$ the blowup morphism from the space currently under consideration (again, this is usually the $n$-th blowup of $\pp^2$) to various earlier stages of the blowups (including $\pp^2$).  To be accurate, one really should denote by $\pi_{n,i}$ for example the blowup morphism from $X_n$ to $X_i$, but we will denote all of these by just $\pi$.  This should not cause any confusions, as it should be clear that $\pi$ in $\pi^*(E_i)$ should  really be $\pi_{n,i}$. We will sometimes use $\pi_n$ to clearly indicate the blowup map from the $n$-th blowup $X_n$ to $\pp^2$.

Since each blowup in our blowup sequence occurs at a (reduced) point inside a smooth surface, it follows from \cite[Exercise II.8.5]{harts} that the canonical divisor acquires one copy of the newest exceptional divisor at every blowup.  Since the canonical divisor of $\pp^2$ can be written as $-\Delta$, it follows immediately that $-\pi_1^*(\Delta) + E_1$ is a canonical divisor on $X_1$.  We then prove by induction that
\begin{equation}\label{eq:canon_div}
-\pi_n^*(\Delta) + \pi^*(E_1) + \cdots + \pi^*(E_{n-1}) + E_n
\end{equation}
is a canonical divisor on $X_n$.

We end this section by computing the normal-crossings divisor on $X_n$ related to $\Delta$, as well as the (local) coordinate ring of $X_n$.  Since heights behave well with respect to pullbacks by morphisms, we will express our divisors in terms of pullbacks from various stages of the blowups.

\begin{proposition}\label{prop:divandcoordring}
Let $X_1$ be the blowup of $\pp^2$ at $[1:0:1]$, let $\Delta = (XYZ = 0)$ and $L = (Y=0)$.  For each $i\ge 1$, let $X_{i+1}$ be the blowup of $X_i$ at one of the intersection points of $\widetilde L$, $\widetilde{E_1}$, $\ldots$, $\widetilde{E_{i-1}}$, and $E_i$.
\begin{itemize}
\item[(i)] For $n\ge 2$, the maximal normal-crossings divisor on $X_n$ which maps to $\Delta$ in $\pp^2$ is
\begin{equation}\label{eq:div_intersect}
D_n = \pi^*(\Delta) - \pi^*(E_2) - \cdots - \pi^*(E_{n-1}) - E_n.
\end{equation}
\item[(ii)] Suppose further that $X_{i+1}$ is the blowup of $X_{i}$ at a point on $E_{i}$ (so we successively blow up at a point on the most-recently-created exceptional divisor).  Then for all $n\ge 1$, $\blow {n+1}$ is obtained from $\blow {n}$ by blowing up along the ideal locally defined by
\[
\left(\frac{(x-1)^{b_n}}{y^{a_n}}, \,\, \frac{y^{c_n}}{(x-1)^{d_n}}\right),
\]
where $\frac{a_n}{b_n}$ and $\frac{c_n}{d_n}$ are Farey neighbors at level $n$.
\end{itemize}
\end{proposition}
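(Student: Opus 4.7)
The plan is to prove both parts by induction on $n$. The common geometric observation driving both arguments is that, at every stage of this blowup sequence, only two components of the current maximal normal-crossings divisor pass through the next blowup center, and they cross transversally there.

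For (i), the base case $n=2$ follows by a direct pullback computation: the line $L$ is smooth at $[1:0:1]$, so $\pi_1^*(\Delta) = \widetilde L + \widetilde{(X=0)} + \widetilde{(Z=0)} + E_1$ on $X_1$, and pulling back once more through the blowup at $\widetilde L \cap E_1$ gives $\pi_2^*(\Delta) = \widetilde L + \widetilde{(X=0)} + \widetilde{(Z=0)} + \widetilde{E_1} + 2E_2$ on $X_2$, so $D_2 = \pi_2^*(\Delta) - E_2$ is the reduced sum of all components, which is visibly normal crossings. For the inductive step, assume that $D_n$ equals the reduced sum $\widetilde L + \widetilde{(X=0)} + \widetilde{(Z=0)} + \widetilde{E_1} + \cdots + \widetilde{E_{n-1}} + E_n$ on $X_n$. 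Since the next blowup center lies at the transverse intersection of exactly two components of $D_n$, we have $\pi^*(D_n) = \widetilde{D_n} + 2E_{n+1}$. Substituting this into $\pi_{n+1}^*(\Delta) = \pi^*(D_n) + \sum_{i=2}^{n} \pi^*(E_i)$ and rearranging immediately yields $D_{n+1} = \widetilde{D_n} + E_{n+1}$, the reduced sum on $X_{n+1}$. The normal crossings property persists because blowing up a transverse intersection of two smooth curves separates their strict transforms and creates an exceptional divisor meeting each of them at one distinct point, introducing no triple intersections.

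For (ii), the base case $n=1$ is verified directly in local affine coordinates: in the blowup chart on $X_1$ around $\widetilde L \cap E_1$ the local coordinates are $(x-1,\, y/(x-1))$, which is the ideal of that point and matches the claimed formula for $(a_1, b_1, c_1, d_1) = (0, 1, 1, 1)$. For the inductive step, suppose the blowup ideal on $X_n$ is $(f_n, g_n) = ((x-1)^{b_n}/y^{a_n},\, y^{c_n}/(x-1)^{d_n})$, cutting out transversally the two components of $D_n$ meeting at the center. On $X_{n+1}$ there are two standard blowup charts near $E_{n+1}$, with local coordinates $(f_n,\, g_n/f_n)$ and $(f_n/g_n,\, g_n)$ respectively; the two candidate intersection points of $E_{n+1}$ with earlier components sit at the origins of these charts, with defining ideals
\[
(f_n,\, g_n/f_n) = \bigl((x-1)^{b_n}/y^{a_n},\, y^{a_n+c_n}/(x-1)^{b_n+d_n}\bigr)
\]
and
\[
(f_n/g_n,\, g_n) = \bigl((x-1)^{b_n+d_n}/y^{a_n+c_n},\, y^{c_n}/(x-1)^{d_n}\bigr).
\]
The new exponent-ratio pairs $(a_n/b_n,\, (a_n+c_n)/(b_n+d_n))$ and $((a_n+c_n)/(b_n+d_n),\, c_n/d_n)$ are precisely the Farey neighbor pairs at level $n+1$, by the mediant construction of the Stern--Brocot tree, completing the induction.

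The principal piece of bookkeeping, and the only mild obstacle, is to maintain throughout the induction in (ii) that the two functions $f_n$ and $g_n$ really cut out (locally at the blowup center) the two components of $D_n$ meeting there, so that they form a system of local parameters and the blowup ideal is literally $(f_n, g_n)$. This is built into the inductive step itself: the new local parameters exhibited above automatically cut out, in each chart, the strict transform of the previously relevant component together with the new exceptional divisor $E_{n+1}$. Once this identification is in place, (i) becomes a routine divisor-pullback calculation and the recursion in (ii) is precisely the Stern--Brocot mediant rule.
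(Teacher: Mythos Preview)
Your proof is correct and follows essentially the same approach as the paper's: both parts are handled by induction, with (i) reduced to the identity $\pi^*(D_n)=\widetilde{D_n}+2E_{n+1}$ coming from the normal-crossings structure at the center, and (ii) verified by writing down the two affine blowup charts in the local parameters $(f_n,g_n)$ and reading off the mediant recursion. The paper's write-up differs only in presentation, not in substance.
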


Note that for (i), we do not assume that the blowup always takes place at a point on the most recently-constructed exceptional divisor.  (ii) will be used in conjunction with Proposition \ref{prop:blowupht} to compute global and local heights with respect to exceptional divisors in our setting.

\begin{proof}
\noindent (i)
We note that $\pi_1^*(\Delta) = \widetilde \Delta + E_1$, which is a normal-crossings divisor on $X_1$.  We only have one possibility next, namely blowing up at the intersection of $\widetilde L$ and $E_1$.  Therefore, $\pi_2^*(\Delta) = \widetilde \Delta + \widetilde{E_1} + 2 E_2$ on $X_2$, so the maximal normal-crossings divisor on $X_2$ which maps to $\Delta$ on $\pp^2$ is $D_2 = \pi_2^*(\Delta) - E_2$.  This serves as the base case for the induction.  Suppose we have already proved the result for $n$, then the next blowup occurs at some intersection point among the divisors in $D_n$ which is not $\widetilde{(X=0)}$ or $\widetilde{(Z=0)}$. It follows that
\[
\widetilde{D_n} + 2 E_{n+1} = \pi^*(D_n) = \pi^*(\Delta) - \pi^*(E_2) - \cdots - \pi^*(E_{n-1}) - \pi^*(E_n),
\]
and the induction continues with $D_{n+1} = \widetilde{D_n} + E_{n+1}$.

\noindent (ii)
In general, if $\spec A$ is a smooth surface and the ideal $(f, g)$ defines a smooth point $P$, then the blowup at $P$ can be described by $\proj A[\alpha:\beta]/(f\beta - g\alpha)$.  Moreover, if we look at the affine patch where $\alpha\neq 0$, namely $\spec A[\frac \beta \alpha]/(f\frac \beta \alpha - g)$, the equation $\frac \beta \alpha = 0$ defines the strict transform of the curve $g=0$ while $f=0$ defines the exceptional divisor.  Similarly, on the affine patch where $\beta \neq 0$, namely $\spec A[\frac\alpha \beta]/(f - g\frac \alpha\beta)$, the equation $\frac \alpha \beta = 0$ defines the strict transform of the curve $f=0$ while $g=0$ defines the exceptional divisor.  Combined, this means that as we blowup at $P$, the two curves $f=0$ and $g=0$ on $\spec A$ are separated, and their strict transforms meet the exceptional divisor at $[0:1]$ and $[1:0]$ in the above-mentioned choice of coordinates.

Now, we compute the coordinate ring of our blowups.  The projective coordinate ring around the blowup point for the first blowup $\blow 1$ is $\proj k[x,y][\alpha_1:\beta_1]/ ((x-1)\beta_1 - y \alpha_1)$.  Next we blow up $\blow 1$ at $\widetilde{(Y=0)}\cap E_1$.  This point is defined locally by $x-1$ and $\frac{\beta_1}{\alpha_1}$, so by the discussion of the above paragraph, the second blowup locally looks like
\[
\proj \left(k\left[x,y,\frac{\beta_1}{\alpha_1}\right] \big/ \left((x-1)\frac{\beta_1}{\alpha_1} - y\right)\right)[\alpha_2:\beta_2] \,\, \big/ \,\, \left((x-1)\beta_2 - \frac{\beta_1}{\alpha_1} \alpha_2\right).
\]
Note that in this case, assuming that $x-1\neq 0$ and $y\neq 0$, we have $\frac{\beta_1}{\alpha_1} = \frac y{x-1}$.  Therefore, we can also say that $\blow 2$ was obtained by blowing up along the ideal defined locally by
\[
\left(\frac{(x-1)^{b_1}}{y^{a_1}}, \,\, \frac{y^{c_1}}{(x-1)^{d_1}}\right),
\]
where $\frac{a_1}{b_1} = \frac 01$ and $\frac{c_1}{d_1} = \frac 11$ are the end points of the interval on the first level of the Stern--Brocot tree.  This is the base case of the induction.  Let us assume that we have proved the claim for $n$.  Since the divisor $\widetilde{\Delta} + \widetilde{E_1} + \cdots \widetilde{E_{n-1}} + E_{n}$ is normal-crossings on $\blow {n}$, there are at most two divisors going through any given point, including the point we blow up to obtain $\blow {n+1}$.  This means that among $\widetilde{\Delta}, \widetilde{E_1}, \ldots, \widetilde{E_{n}}$, the only divisors that meet $E_{n+1}$ on $\blow {n+1}$ are exactly the ones that are locally defined by $\frac{(x-1)^{b_n}}{y^{a_n}}$ and $\frac{y^{c_n}}{(x-1)^{d_n}}$.  Therefore, by the first paragraph of the proof, the intersection points on $E_{n+1}$ in $\blow {n+1}$ are defined locally by either
\[
\frac{(x-1)^{b_n}}{y^{a_n}} \quad \text{ and } \quad \frac{y^{c_n}}{(x-1)^{d_n}} \big/ \frac{(x-1)^{b_n}}{y^{a_n}}  = \frac{y^{a_n+c_n}}{(x-1)^{b_n+d_n}}
\]
or
\[
\frac{(x-1)^{b_n}}{y^{a_n}}  \big/ \frac{y^{c_n}}{(x-1)^{d_n}}  = \frac{(x-1)^{b_n+d_n}}{y^{a_n+c_n}} \quad \text{ and } \quad  \frac{y^{c_n}}{(x-1)^{d_n}}.
\]
Hence, the induction continues to $n+1$.
\end{proof}

\begin{remark}\label{rem:blowupsbtree}
We can think of each $\bullet$ in the Stern--Brocot tree (Figure \ref{fig:sbtree}) at level $n$ as a possibility for the $(n+1)$-th blowup in the setting of Proposition \ref{prop:divandcoordring} (ii), by viewing two neighboring fractions to each $\bullet$ as representing the powers of $x-1$ and $y$ in the local equations of the ideal along which the most recent blowup occurred.  The first blowup at $[1:0:1]$ of $\pp^2$ is not depicted in the tree, and there is only one possibility at level $1$ of the tree (corresponding to $\blow 2$).  From there, whenever we blow up at $[1:0]$ in the coordinate system described above, we take the left fork to the next level of the tree, and whenever we blow up at $[0:1]$, we take the right fork to the next level.  It therefore follows that continuing to blow up at the (unique) point on $E_n\cap \widetilde{(Y=0)}$ amounts to taking the left fork all the time, so the ideal along which we blow up to obtain $\blow {n+1}$ in this case is defined locally by $x-1$ and $\frac{y}{(x-1)^n}$.  We will make use of this observation in the proof of Theorem \ref{thm:vojtawithy}.
\end{remark}

\section{Proofs of Theorems}\label{sec:proof}
We now prove Theorems \ref{thm:vojtawithy}--\ref{thm:abctovc}.  Since projective equivalences do not change the content of Vojta's conjecture, we may assume without loss of generality that the three lines $\Delta$ are defined by $XYZ = 0$ and our first blowup is at the point $[1:0:1]$, which is on $L_1 = (Y=0)$ but not on the other two lines.  To shorten notation, we will denote $L_1$ just by $L$.  We will continue to use the notations from Section \ref{sec:geom}.

\begin{proof}[Proof of Theorem \ref{thm:vojtawithy}]
For this theorem, we always blow up at the intersection point with $\widetilde L$.  Since two curves meeting transversally get separated by the blowup at the intersection point, it is easy to see by induction that there is only one point on $\widetilde L$ in $X_{n-1}$ which intersects with $\widetilde{E_1} \cup \cdots \cup \widetilde{E_{n-2}} \cup E_{n-1}$, namely the intersection point with the most-recently-created exceptional divisor $E_{n-1}$.  Hence, it follows that there is a unique sequence of blowups which satisfies the hypothesis of Theorem \ref{thm:vojtawithy}.

Since we may put finitely many curves into the exceptional set of Vojta's conjecture, we will only need to confirm the veracity of Vojta's inequality \eqref{eq:vojtacon} for points of the form $P = [a:b:1]$ for $a\in k\setminus \{0,1\}$ and $b\in k^*$.  Note that these points avoid the images under $\pi$ of the exceptional divisors, so they correspond to a unique point in $X_n$; we will often implicitly identify $P$ with the corresponding point in the various stages of the blowup.  The blowup point for the first blowup is defined by $x-1$ and $y$; the blowup point for the second blowup is (locally) defined by $x-1$ and $\frac y{x-1}$, and as noted in Remark \ref{rem:blowupsbtree}, the blowup point on $X_{i-1}$ for the $i$-th blowup is locally defined by $x-1$ and $\frac y{(x-1)^{i-1}}$.  Therefore, using the description \eqref{eq:canon_div} of the canonical divisor $K_n$ on $X_n$ and the description \eqref{eq:div_intersect} of the normal-crossings divisor $D_n$, the Vojta's inequality on $X_n$ with respect to $D_n$ is
\begin{align*}
\sum_{v\in S} &\lambda_v(D_n, P) + h_{X_n}(K_n, P)\\
&= \sum_{v\in S} \lambda_v(\pi^*(\Delta) - \pi^*(E_2) - \cdots - \pi^*(E_{n-1}) - E_n, P) \\
&\phantom{\lambda_v(\pi^*(\Delta) - \pi^*(E_2) - \cdots)} + h_{X_n}(-\pi_n^*(\Delta) + \pi^*(E_1) + \cdots + \pi^*(E_{n-1}) + E_n, P)\\
&= - \sum_{v\notin S} \lambda_v(\Delta, P) + h_{X_1}(E_1, P) + \sum_{v\notin S} \sum_{i=2}^n \lambda_v(E_i, P) \,\,+\,\, O(1)\\
&< \epsilon h_{X_n}(A_n, P) + O(1),
\end{align*}
where $A_n$ is an ample divisor on $X_n$ and we used \eqref{eq:decomp} and the functoriality with respect to pullbacks in the second equality.
Note that we can calculate all the relevant local height functions using Propositions \ref{prop:blowupht} and \ref{prop:divandcoordring} (ii), as well as Remark \ref{rem:blowupsbtree}.  Using \eqref{eq:decomp} and Proposition \ref{prop:ample}, it now suffices to show the following inequality:
\begin{multline}\label{eq:vc_intersect_detail}
\sum_{v\in M_k} \gcdpv(a-1, b)  + \sum_{i=2}^{n} \sum_{v\notin S} \gcdpv\left(a-1, \frac b{(a-1)^{i-1}}\right) \\
< \epsilon\max (h(a), h(b)) + \sum_{v\notin S} \lambda_v((XYZ=0), [a:b:1]) + C.
\end{multline}
We will first prove the following elementary lemma:
\begin{lemma}\label{lem:elementary}
For a real number $\alpha \in [0,1]$ and a natural number $\ell$, we always have
\[
\alpha + \max(0, \min(\alpha, 1-\alpha)) + \max(0, \min(\alpha, 1-2\alpha)) + \cdots + \max(0, \min(\alpha, 1-\ell\alpha)) \le 1.
\]
\end{lemma}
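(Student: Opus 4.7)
The plan is a direct case analysis driven by the integer part of $1/\alpha$. For $\alpha=0$ every summand is $0$, so assume $\alpha\in(0,1]$ and set $k=\lfloor 1/\alpha\rfloor$, which satisfies $\tfrac{1}{k+1}<\alpha\le\tfrac{1}{k}$. The basic observation is that each summand $\max(0,\min(\alpha,1-j\alpha))$ takes one of three simple values depending on where $j$ sits relative to $k$: it equals $\alpha$ when $(j+1)\alpha\le 1$ (equivalently $j\le k-1$), it equals $1-j\alpha$ when $j\alpha\le 1<(j+1)\alpha$ (equivalently $j=k$), and it equals $0$ when $j\alpha\ge 1$ (i.e.\ $j\ge k+1$, with the boundary case $\alpha=1/k$ being consistent since $1-k\alpha=0$ there).

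Next I would split according to whether $k$ is large or small relative to $\ell$. In the first case, $k\ge\ell+1$ (equivalently $\alpha\le 1/(\ell+1)$), every $j\in\{1,\ldots,\ell\}$ lies in the range $j\le k-1$, so each of the $\ell$ summands equals $\alpha$ and the total is $(\ell+1)\alpha\le 1$. In the second case, $k\le\ell$, the contributions are: the leading $\alpha$, a summand $\alpha$ for each $j=1,\ldots,k-1$, the summand $1-k\alpha$ at $j=k$, and zeros for $j\ge k+1$. These telescope to $k\alpha+(1-k\alpha)=1$, and in fact equality holds on the nose throughout this range.

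There is no genuine obstacle here; this is a one-variable convexity/piecewise-linearity exercise, and the only mild care needed is bookkeeping the borderline situation $\alpha=1/k$, where both the ``$1-j\alpha$'' and ``$0$'' descriptions of the $j=k$ summand agree and one should not double count. Writing up the proof should consist only of making the three-regime description of the summands explicit and then executing the two cases above.
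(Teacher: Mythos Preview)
Your proof is correct and essentially identical to the paper's own argument: both set the pivotal integer from $1/\alpha$ (you use $k=\lfloor 1/\alpha\rfloor$, the paper uses the $m$ with $\tfrac{1}{m+1}\le\alpha<\tfrac{1}{m}$, which differs from your $k$ only at $\alpha=1/N$), observe that the summands are $\alpha$, then $1-k\alpha$, then $0$ as $j$ increases, and split into the two cases $\ell<k$ and $\ell\ge k$ to get $(\ell+1)\alpha\le 1$ or exactly $1$.
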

\begin{proof}
When $\alpha =0$ or $1$, the claim is obvious as the left-hand side is exactly $\alpha$.  Otherwise, for each $\alpha$, there exists a unique natural number $m$ such that $\frac 1{m+1} \le \alpha < \frac 1m$.  In this case, $\min(\alpha, 1-i\alpha) = \alpha$ for $0\le i<m$, $\min(\alpha, 1- m\alpha) = 1-m\alpha$ and $\min(\alpha, 1-i\alpha)\le 0$ for $i\ge m+1$.  Therefore, the left-hand side of the desired inequality is
\[
\begin{cases}
(\ell+1) \alpha < 1 &\text{if } \ell < m\\
\underbrace{\alpha+\cdots + \alpha}_{m\text{ times}} + (1-m\alpha) = 1 &\text{if } \ell\ge m
\end{cases}
\]
and thus the lemma has been proved.
\end{proof}

Going back to the proof of the theorem, we first analyze \eqref{eq:vc_intersect_detail} for each $v\notin S$.  When $v(a-1) \le 0$ or $v(b) \le v(a-1)$, the $v$-part of the left-hand side is obviously bounded by $\max(0, v(b)) \le v(b) - \min(v(a), v(b), 0) = \lambda_v((Y = 0), [a:b:1])$.  Otherwise, we use Lemma \ref{lem:elementary} with $\alpha = \frac{v(a-1)}{v(b)}$ to conclude that the $v$-adic part of the left-hand side is bounded by $v(b)$.  Therefore, it follows that
\begin{equation}\label{eq:intersect_outside_S}
\gcdpv(a-1, b) + \sum_{i=2}^{n} \gcdpv\left(a-1, \frac b{(a-1)^{i-1}}\right) \le \lambda_v((Y=0), [a:b:1])
\end{equation}
for each $v\notin S$.  It is important that we can state \eqref{eq:intersect_outside_S} without constants, as there are infinitely many $v\notin S$.  We now work with the $S$-part of \eqref{eq:vc_intersect_detail}.  We note that
\[
\sum_{v\in S} \gcdpv(a-1, b) \le \sum_{v\in S} \max(0, v(a-1))
\]
and this in turn is bounded by
\begin{equation}\label{eq:intersect_inside_S}
\epsilon h(a) + \sum_{v\notin S} |v(a)| + C
\end{equation}
by Proposition \ref{prop:ridout}.  Now, note that $\lambda_v((X=0), [a:b:1]) \ge v(a)$ when $v(a)$ is positive and $\lambda_v((Z=0), [a:b:1]) \ge -v(a)$ when $v(a)$ is negative.  Therefore, \eqref{eq:intersect_inside_S} is in turn less than or equal to
\[
\epsilon \max(h(a), h(b)) + \sum_{v\notin S} \lambda_v((XZ =0), [a:b:1]) + C.
\]
Combining this relation with \eqref{eq:intersect_outside_S}, we have now proved \eqref{eq:vc_intersect_detail}, hence the theorem.
\end{proof}

For Theorem \ref{thm:vctoabc}, we will actually prove a slight generalization, which we state now:

\begin{thmbis}{thm:vctoabc}
Let $L_1, L_2, L_3$ be three lines of $\pp^2$ defined over $\overline \qq$ in general position.  We let $X_1$ be the blowup of $\pp^2$ at a point defined over $\overline \qq$ in $L_1 \setminus (L_2 \cup L_3)$ with $E_1$ as the exceptional divisor.  For $n\ge 2$, we construct $X_n$ inductively by blowing up $X_{n-1}$ at a point which is the intersection point of two of $\widetilde{L_1}, \widetilde{E_1}, \ldots, \widetilde{E_{n-2}}, E_{n-1}$, obtaining the new exceptional divisor $E_n$.  Further, let us assume that at least one blowup occurs at a point outside of $\widetilde{L_1}$. Then Vojta's inequality \eqref{eq:vojtacon} for such an $X_n$ with respect to the divisor
\[
\widetilde{L_1}+\widetilde{L_2}+\widetilde{L_3}+\widetilde{E_1} + \cdots +\widetilde{E_{n-1}} + E_n
\]
implies the inequality \eqref{ineq:abc} of the $abc$ conjecture with $1-3\rho - \epsilon$ in place of $1-\epsilon$ for subsets of the following special form: for a number field $k$, a finite set $S$ of places, $0\le \rho < \frac 13$, a constant $C$,
\begin{equation}\label{eq:primedthm}
\{(a,b,c): a\in \mathcal O_S,  |a|_S' \le C \cdot H(a)^\rho, \,\, b=1-a, \,\, c=1\}.
\end{equation}
Moreover, when $\rho$ is equal to $0$, the exceptional set to Vojta's inequality only affects finitely many $a$'s in \eqref{eq:primedthm}.
\end{thmbis}

The case of Theorem \ref{thm:vctoabc} is $\rho =0$ and $C=1$.  Thus, the $abc$ conjecture for $a\in \mathcal O_S^*$ and $c=1$ follows by adjusting the constant term of \eqref{ineq:abc}, deriving Theorem \ref{thm:vctoabc} from Theorem \ref{thm:vctoabc}$'$.

\begin{proof}[Proof of Theorem \ref{thm:vctoabc}$\,'$]
Suppose that after the blowup at $[1:0:1]$, all the blowups occur at intersection points of the strict transforms of $\Delta$ and previously-constructed exceptional divisors, and at least one blowup occurs at a point not on $\widetilde L$.  In general, there are many such intersection points, so each blowup may not necessarily occur at a point on the most-recently constructed exceptional divisor.  On the other hand, by working backwards from the first blowup that is not on $\widetilde{L}$ and picking up the relevant blowups, we can rearrange the order of the blowups (up to an isomorphism with the original space in question) so that the $i$-th blowup takes place at $\widetilde L \cap E_{i-1}$ for $2\le i <n$ and the $n$-th blowup takes place at a point in
\[
E_{n-1} \cap \left(\widetilde{E_1}\cup \cdots \cup \widetilde{E_{n-2}}\right) \quad \setminus \widetilde L,
\]
doing the rest of the blowups afterward.  In fact, by construction, $E_{n-1}$ only meets $\widetilde{E_{n-2}}$, so there is actually only one choice for the $n$-th blowup.  To repeat, we rearrange our blowups so that the second through $(n-1)$-th blowups occur at the intersection point of $\widetilde L$ with the most-recently-created exceptional divisor, the $n$-th blowup occurs at $E_{n-1} \cap \widetilde {E_{n-2}}$ (so not on $\widetilde L$), and the rest of the blowups occur at some intersection points of the strict transforms of $\Delta$ and various exceptional divisors from earlier stages. Since we know from \cite[Example 3.5.4]{vojta} that Vojta's conjecture on the blowup is stronger than  Vojta's conjecture on the base, it is sufficient to prove this theorem for this rearranged $n$-th blowup $X_n$.  We note that there is only one intersection point on the first blowup and it lies on $\widetilde L$, so $n$ must be at least $3$ to meet the hypothesis of the theorem.

According to Remark \ref{rem:blowupsbtree}, our situation corresponds to traversing the left fork at each stage to go down in the Stern--Brocot tree (Figure \ref{fig:sbtree}) until the very end when we take the right fork for the first time to level $n-1$.  Using Proposition \ref{prop:divandcoordring}, the first blowup point is defined by $x-1$ and $y$; the second blowup point is defined (locally) by $x-1$ and $\frac y{x-1}$.  Similarly, the $i$-th blowup point is defined locally by $x-1$ and $\frac y{(x-1)^{i-1}}$ for $i\le n-1$, and then the last blowup (the $n$-th one) is at a point defined locally by $\frac{(x-1)^{n-1}}y$ and $\frac y{(x-1)^{n-2}}$.

The description \eqref{eq:canon_div} of the canonical divisor $K_n$ on $X_n$ and the description \eqref{eq:div_intersect} of the normal-crossings divisor $D_n$  remain the same as in the proof of Theorem \ref{thm:vojtawithy}, so Vojta's inequality on $X_n$ with respect to $D_n$ is
\begin{align*}
\sum_{v\in S} &\lambda_v(D_n, P) + h_{X_n}(K_n, P)\\
&= - \sum_{v\notin S} \lambda_v(\Delta, P) + h_{X_1}(E_1, P) + \sum_{v\notin S} \sum_{i=2}^n \lambda_v(E_i, P)\,\,+\,\,O(1)\\
&< \epsilon h_{X_n}(A_n, P) + O(1),
\end{align*}
where $A_n$ is an ample divisor on $X_n$ and $O(1)$ indicates some constant (as we proceed further, the constant will change, but we will continue to simply denote by the big-$O$ notation).  Using the local equations of the blowup points given in the previous paragraph, we can use Proposition \ref{prop:blowupht} to rewrite this as
\begin{multline}\label{eq:vc_intersect_detail_abc}
\sum_{v\in M_k} \gcdpv(a-1, b)  + \sum_{i=2}^{n-1} \sum_{v\notin S} \gcdpv\left(a-1, \frac b{(a-1)^{i-1}}\right) \\
\phantom{AAAAAAAAA} +  \sum_{v\notin S} \gcdpv\left(\frac{(a-1)^{n-1}}b, \frac b{(a-1)^{n-2}}\right)\\
< \epsilon\max (h(a), h(b)) + \sum_{v\notin S} \lambda_v((XYZ=0), [a:b:1]) + O(1),
\end{multline}
where we also adjusted $\epsilon$ suitably in conjunction with the second conclusion of Proposition \ref{prop:ample}.

Note that it is enough to show the $abc$-type inequality \eqref{ineq:abc} for an enlarged $S$. In particular, we may assume that the set of $S$-integers is a PID.  Suppose that we fix a positive constant $C$, and we assume that $a\in \mathcal O_S$ satisfies $|a|_S' \le C\cdot H(a)^\rho$.
We factor $a-1$ as
\[
a-1 = \delta \cdot \prod_p p^{n_p},
\]
where $\delta \in \mathcal O_S^*$, $p$ is a generator of a prime ideal of $\mathcal O_S$, and $n_p$ is assumed to be at least $1$.  We now define $m_p$ by
\[
m_p = \begin{cases}
0 &\text{if } n_p = 1\\
\frac{n_p}2\cdot (2n-3) &\text{if } n_p \text{ is even}\\
\frac{n_p(2n-3) - 1}2 &\text{if } n_p \text{ is odd and at least } 3
\end{cases}
\]
and let $b = \prod_p p^{m_p}$.  We now analyze \eqref{eq:vc_intersect_detail_abc} for each $p$ with $n_p\ge 2$.  We will denote by $v(x) = v_p(x)$ the normalized additive valuation corresponding to $p$.  Let us first assume that $n_p$ is even.  The Farey interval of $\alpha_p = \frac{v_p(a-1)}{v_p(b)} = \frac{2}{2n-3}$ on the first level of the tree is $(0, 1)$, the second level of the tree is $(0, \frac 12)$, and so on, until the $(n-2)$-th level $(0, \frac 1{n-2})$ and then the Farey interval on the $(n-1)$-th level is $(\frac 1{n-1}, \frac 1{n-2})$; $\alpha_p$ itself appears for the first time in the tree at level $n$.  By noting
\begin{multline}\label{eq:locht_interval}
\gcdpv\left(\frac{(x-1)^{b_i}}{y^{a_i}}, \,\, \frac{y^{c_i}}{(x-1)^{d_i}}\right) \\
= v(y) \left[\max\left(0, \,\, \min\left(b_i \frac{v(x-1)}{v(y)} - a_i, \,\,\, c_i - d_i \frac{v(x-1)}{v(y)}\right)\right)\right],
\end{multline}
it follows immediately from Theorem \ref{thm:farey} that
\begin{align*}
v(p^{n_p}) + \sum_{i=2}^{n-1} &\gcdpv\left(a-1, \frac b{(a-1)^{i-1}}\right) +   \gcdpv\left(\frac{(a-1)^{n-1}}b, \frac b{(a-1)^{n-2}}\right) \\
&\qquad\qquad\qquad\qquad - \lambda_v((Y=0), [a:b:1])\\
&= v(p^{n_p}) + v(b) \cdot \left(1-\frac 1{\mathrm{denom}(\alpha_p)}\right) - v(b) = v(p^{n_p/2}).
\end{align*}
We next assume that $n_p$ is odd and at least $3$.  Since $\frac 2{2n-3} < \alpha_p = \frac {n_p}{m_p} < \frac 1{n-2}$, it follows that Farey intervals of $\alpha_p$ is the same as that for $\frac 2{2n-3}$ up to level $n-1$.  By Theorem \ref{thm:farey}, the function $\varphi\raisebox{-1.0ex}{}_{\!\!\scriptscriptstyle{\frac{2}{2n-3}}}$ evaluated at $\alpha_p$ must be
\begin{align*}
&\frac{\left(1-\frac 1{2n-3}\right) - \left(1-\frac 1{n-2}\right)}{\frac 2{2n-3} - \frac 1{n-2}} \cdot \left(\frac{n_p}{m_p} - \frac 2{2n-3}\right) + \left(1-\frac 1{2n-3}\right)\\
&= 1-\frac 1{2n-3} - \frac {n-1}{m_p (2n-3)}.
\end{align*}
Therefore, \eqref{eq:locht_interval} shows that
\begin{align*}
v(p^{n_p}) + \sum_{i=2}^{n-1} &\gcdpv\left(a-1, \frac b{(a-1)^{i-1}}\right) +   \gcdpv\left(\frac{(a-1)^{n-1}}b, \frac b{(a-1)^{n-2}}\right) \\
&\qquad\qquad\qquad\qquad - \lambda_v((Y=0), [a:b:1])\\
&= v(p^{n_p}) + v(b) \cdot \left(1-\frac 1{2n-3} -\frac {n-1}{m_p (2n-3)}\right) - v(b)\\
&= v(p^{n_p}) + v(p) \cdot \left(- \frac{m_p}{2n-3} - \frac{n-1}{2n-3}\right)\\
&= v(p^{n_p}) + v(p) \cdot \left(-\frac{n_p}2 + \frac 1{2(2n-3)} - \frac{n-1}{2n-3}\right) = v(p)\cdot \frac{n_p-1}2.
\end{align*}
Ignoring the contributions from $\gcdpv(a-1, b)$ for $v\in S$, we now see that \eqref{eq:vc_intersect_detail_abc} implies
\begin{equation}\label{eq:onepoweroff}
\sum_{p: n_p \text{ even}} v_p(p^{n_p/2}) + \sum_{\substack{p: n_p \text{ odd}\\n_p\ge 3}} v_p(p^{(n_p-1)/2}) < \epsilon \max(h(a), h(b)) + \log |a|_S' + O(1).
\end{equation}
By our construction of $b$, we have $h(b) \le n(h(a-1)) \le n h(a) + n \log 2$.  Therefore, Proposition \ref{prop:ridout} and \eqref{eq:onepoweroff} now implies that
\begin{equation}\label{eq:vojtatoabc_final}
\sum_{v\in S} \max(0, v(a-1)) + \sum_{v_p\notin S} v_p(p^{n_p}) - \sum_{\substack{v_p\notin S\\n_p \text{ odd}}} v_p(p) < (1+2n)\epsilon h(a) + 3 \log |a|_S' + O(1).
\end{equation}
By subtracting \eqref{eq:vojtatoabc_final} from $h(a-1) = h(a)+O(1)$, we conclude that
\[
\sum_{\substack{v_p\notin S\\n_p \text{ odd}}} v_p(p)   > (1-(1+2n)\epsilon) h(a) - 3 \log |a|_S'  - O(1) \ge (1-(1+2n)\epsilon - 3\rho) h(a) - O(1)
\]
by our hypothesis $|a|_S' \le C\cdot H(a)^\rho$.  Since the truncated local height function $\trunloc((1), a)$ is at least the left-hand side, we have now derived the weakened inequality of the $abc$ conjecture, with $1-3\rho - \epsilon'$ in place of $1-\epsilon'$.

So far, what we have shown is that the inequality in Vojta's conjecture implies the weakened inequality of the $abc$ conjecture.  Since Vojta's conjecture itself has some exceptional set where the inequality might not hold, we have to make sure that the points we plug into \eqref{eq:vc_intersect_detail_abc} do not lie in the exceptional set.  For this part, we need to further assume that $\rho = 0$.  Since only finitely many rational primes $q$ satisfy $\log q \le C^{[k:\qq]}$, we can enlarge $S$ to $S'$ so that any $a$ satisfying $|a|_S' \le C$ is actually an $S'$-unit. Pulling back the points $[a:b:1]$ as described above by the (rational) algebraic map $(x,y)\mapsto (x, y^{2n-3})$, the points satisfy $y^2 | (x-1)$ with $x$ being a unit and $y$ being integral.  The proof of \cite[Proposition 2]{yasufuku2} shows that an irreducible curve containing infinitely many such points cannot be in an exceptional set.  Since the exceptional set of Vojta's inequality is a finite union of algebraic curves, it now follows that only finitely many $a$'s are affected.
\end{proof}

\begin{remark}\label{rem:vctoabc_ratl}
We can apply the same tactics for a non-integral $a = \frac AB$ with $\mathrm{gcd}(A,B) = 1$.  This enables us to obtain the same $abc$-type inequality for the relation $A + (-B) = A- B$ when $|AB|_S' \le C\cdot H(a)^\rho$.  In this case, we factor $A-B$ instead of $a-1$ and define $m_p$ and $b$ in a similar fashion, evaluating \eqref{eq:vc_intersect_detail_abc}
at $[A/B : b : 1]$.  Since $\lambda_v((Z=0), [a:b:1])$ is $v(B)$ and $\sum_{v\notin S} |v(a)| = \sum_{v\notin S} v(AB)$, \eqref{eq:onepoweroff} and \eqref{eq:vojtatoabc_final} hold with $\log |AB|_S'$ in place of $\log |a|_S'$.  The rest of the argument proceeds as before.  We will make a similar argument for non-integral elements in the proof of Theorem \ref{thm:abctovc}, so we skip the details here.

Again, this only shows that the inequality of Vojta's conjecture implies the weakened inequality of the $abc$ conjecture with $1-3\rho - \epsilon'$. To deal with exceptional sets, one would need a different argument for non-$S$-units.
\end{remark}

\begin{proof}[Proof of Theorem \ref{thm:abctovc}]
Vojta's conjecture is stronger for a larger $S$, so we may again assume that $\mathcal O_S$ is a PID.  Since we can remove finitely many curves from consideration, it will be sufficient to prove Vojta's inequality for points of the form $[a:b:1]$ with $a\in k\setminus \{0,1\}$ and $b\in k^*$.  As before, the canonical divisor on $X_n$ can be described by \eqref{eq:canon_div} and the normal-crossings divisor $D_n$ is given by \eqref{eq:div_intersect}, so the goal is again to prove
\begin{equation}\label{eq:vc_intersect2}
h_{X_1}(E_1, P) + \sum_{v\notin S} \sum_{i=2}^n \lambda_v(E_i, P)
< \epsilon h_{X_n}(A_n, P) + \sum_{v\notin S} \lambda_v(\Delta, P) + O(1),
\end{equation}
where $P = [a:b:1]$ and $A_n$ is an ample divisor on $X_n$.  Since there are always two points on the newest exceptional divisor which meet with other components of $D_n$, it is possible to go back to the one that was not blown up initially.  That is, the blowup might occur at a point not on the most-recently-created exceptional divisor.

We first assume that $a\in \mathcal O_S$, and let
\[
a-1 = \delta \cdot \prod_{v_p\notin S} p^{n_p},
\]
where $\delta \in \mathcal O_S^*$, $p$ is a generator of a prime ideal in $\mathcal O_S$, and $n_p >0$.

The general strategy of the proof is to analyze \eqref{eq:vc_intersect2} for each $v\notin S$, eventually obtaining \eqref{eq:abctovc_vadic} below.  We then sum this over $M_k\setminus S$ and combine it with the global information about the largeness of the radical of $a-1$, provided to us by the $abc$ conjecture. This leads us to Vojta's conjecture in this setting.  For the local analysis leading to \eqref{eq:abctovc_vadic}, we rearrange the order of the blowups so that all of the $v=v_p$-adic local height contributions come in the first $m$ blowups.  This corresponds to a downward path in the Stern--Brocot tree (Figure \ref{fig:sbtree}) which keeps $\alpha_p = \frac{v_p(a-1)}{v_p(b)}$ inside the Farey interval up to level $m-1$. Now, the sum of the $v$-adic local height contributions will be the largest for the point $[a:b:1]$ if we go down in the Stern--Brocot tree all the way to $\frac{v_p(a-1)}{v_p(b)}$ (note that this is a rational number), and the contributions in this case are computed using Theorem \ref{thm:farey}.  Our sequence of $m$ blowups is shorter than the longest path, so we obtain an upper bound for the sum of the $v$-adic contributions.  To summarize, we will be rearranging the blowups in a different way for each $v$ to get an estimate of the $v$-adic contributions, but the final estimate \eqref{eq:abctovc_vadic} does not involve the rearrangements, so we can combine this information together and compare with the global information from the $abc$ conjecture.

We now make this precise.  For each $p$, if $v_p(b) \le 0$ then there is no $v$-adic contribution at all, and if $v_p(b) \ge v_p(a-1)$, then the only $v$-adic contribution comes from the first exceptional divisor.  Otherwise, we will rearrange the order of blowups using the notion of \textit{relevant subtowers} \cite{yasufuku}.  More specifically, for each $E_i$ in the original ordering, we can find its \textbf{relevant subset} of $\{1,\ldots, i\}$ by picking out the blowups necessary to construct $E_i$.  That is, the subset is constructed by looking at the image of $E_i$ to $X_{i-1}$ and determining the largest index $j$ for which this point is on $\widetilde{E_j}$, and working inductively backward to the first blowup.  By just going through the relevant subset of $E_i$, we can construct the blowup sequence to $E_i$, called the \textbf{relevant subtower}.  Now, by construction, every relevant subtower has the property that each blowup occurs on the most recently-constructed exceptional divisor.  Since we are only doing the intersection blowups, this subtower corresponds to traversing the Stern--Brocot tree, as discussed in Proposition \ref{prop:divandcoordring} and Remark \ref{rem:blowupsbtree}.  Among the relevant subtowers of $E_1,\ldots, E_n$, we look at the maximal (in terms of level) subtower for which $\alpha_p = \frac{v_p(a-1)}{v_p(b)}$ stays inside the Farey interval for all levels.  Let us assume that this maximal subtower has $m-1$ levels; since the first level of the tree is already the second blowup, this means that it requires $m$ blowups to construct this subtower.  By \cite[Proposition 11]{yasufuku}, we may reorder our blowups (up to an isomorphism with the original space) so that this subtower of $m$ blowups is constructed first and then the rest of the remaining blowups later.

After this rearrangement, the relevant subtower for the $(i+1)$-th exceptional divisor for $i\ge m$ corresponds to going down the Stern--Brocot tree where the final Farey interval does not contain $\frac{v_p(a-1)}{v_p(b)}$. By Proposition \ref{prop:divandcoordring} and \eqref{eq:locht_interval}, it follows immediately that there is no $v$-adic local height contribution from such an exceptional divisor.  Therefore,
we may now assume that we are traversing the Stern--Brocot tree up to $m-1$ levels, and the corresponding $m$ exceptional divisors will be the ones contributing to the $v$-adic part of \eqref{eq:vc_intersect2} at $P = [a:b:1]$.
By Propositions \ref{prop:blowupht} and \ref{prop:divandcoordring}, the $v=v_p$-adic part of the left-hand side of \eqref{eq:vc_intersect2} is
\begin{equation}\label{eq:vc_intersect_detail_abc2}
\gcdpv(a-1, b)  + \sum_{i=1}^{m-1} \gcdpv\left(\frac{(a-1)^{b_i(\alpha_p)}}{b^{a_i(\alpha_p)}}, \frac {b^{c_i(\alpha_p)}}{(a-1)^{d_i(\alpha_p)}}\right).
\end{equation}
It might actually take more than $m-1$ levels to traverse the Stern--Brocot tree all the way down to $\alpha_p$; by a similar argument as done in the proof of Theorem \ref{thm:vctoabc}$'$, Theorem \ref{thm:farey} and \eqref{eq:locht_interval} show that the contribution in this longest case is
\[
\gcdpv(a-1, b) + v(b) \cdot \frac{\mathrm{denom}(\alpha_p)-1}{\mathrm{denom}(\alpha_p)}  \le \gcdpv(a-1, b) + v(b/p).
\]
Our subtower of $m$ blowups is stopping in the middle of this path all the way to $\alpha_p$, so \eqref{eq:vc_intersect_detail_abc2} will be no bigger.  Therefore,  the $v$-adic part of the left-hand side of \eqref{eq:vc_intersect2} satisfies
\begin{align}
\sum_{i=1}^n \lambda_v(E_i, P) &\le \gcdpv(a-1, b) + v(b/p) \le v(p^{n_p}) + v(b/p)\notag \\
&= v(p^{n_p-1}) + v(b) = v(p^{n_p-1}) + \lambda_v((Y=0),P).  \label{eq:abctovc_vadic}
\end{align}
Note that this is valid for all $v=v_p\notin S$ for which $n_p>0$, even when $v_p(b)\le 0$ or $v_p(b) \ge v_p(a-1)$.  Now, we switch from the $v=v_p$-adic analysis to a global analysis.  From the $abc$ conjecture (Conjecture \ref{conj:abc}), our assumption that $a\in \mathcal O_S$ implies
\[
\sum_{\substack{v_p\notin S\\n_p>0}} v_p(p) =  \sum_{v\notin S} \trunloc((1),a) \ge (1-\epsilon) h(a) - \sum_{v\notin S} \trunloc((0),a) - O(1).
\]
Subtracting this from
\begin{align*}
\sum_{v\in S} \vplus(a-1) &+ \sum_{v_p\notin S} v_p(p^{n_p}) = \sum_{v\in M_k} \max(-\log |a-1|_v, 0)\\
&= h\left(\frac 1{a-1}\right) = h(a-1) = h(a)+O(1),
\end{align*}
we obtain
\begin{equation}\label{eq:abctovc_spart}
\sum_{v\in S} \vplus(a-1) + \sum_{\substack{v_p\notin S\\n_p>0}} v_p(p^{n_p-1}) \le \epsilon h(a) + \sum_{v\notin S} \trunloc((0),a) + O(1).
\end{equation}
Combining with \eqref{eq:abctovc_vadic}, we obtain
\begin{align*}
h_{X_1}(E_1, P) &+ \sum_{i=2}^n \sum_{v\notin S} \lambda_v(E_i, P) = \sum_{v\in S} \gcdpv(a-1, b) + \sum_{i=1}^n \sum_{v\notin S} \lambda_v(E_i, P)\\
&\le \sum_{v\in S} \vplus(a-1) + \sum_{\substack{v_p\notin S\\n_p>0}} \left(v_p(p^{n_p-1}) + \lambda_{v_p}((Y=0),P)\right)\\
&\le \epsilon h(a) + \sum_{v\notin S} \trunloc((0),a) + \sum_{v\notin S} \lambda_v((Y=0),P) + O(1)\\
&\le \epsilon h(a) + \sum_{v\notin S} \lambda_v((XY=0), P).
\end{align*}
Using Proposition \ref{prop:ample}, we have now derived \eqref{eq:vc_intersect2}, concluding the proof of the theorem when $a\in \mathcal O_S$.

We now assume that $a = \frac AB$, $A, B\in \mathcal O_S$ with $\mathrm{gcd}(A,B) = 1$.  Then writing $A - B = \delta \cdot \prod p^{n_p}$ as before, the $abc$ conjecture for $a$ says that
\[
\sum_{\substack{v_p\notin S\\n_p>0}} v_p(p)  \ge (1-\epsilon) h(a) - \sum_{v\notin S} \left(\trunloc((0), A) + \trunloc ((0), B)\right) - O(1).
\]
Note that the denominators of $a$ and $b$ do not contribute to $\lambda_v(E_i, P)$ for $v\notin S$, since $E_i$ is inside $\pi^{-1}(E_1)$.
Therefore, \eqref{eq:abctovc_vadic} remains valid, and the same argument as above yields
\begin{align*}
h_{X_1}&(E_1, P) + \sum_{i=2}^n \sum_{v\notin S} \lambda_v(E_i, P) \\
&\le \epsilon h(a) + \sum_{v\notin S} \left(\trunloc((0),A) + \trunloc((0), B)\right) + \sum_{v\notin S} \lambda_v((Y=0),P) + O(1)\\
&\le \epsilon h(a) + \sum_{v\notin S} \lambda_v((XYZ=0), P) + O(1),
\end{align*}
as $v(A) \le \lambda_v((X=0),[a:b:1])$ and $v(B) \le \lambda_v((Z=0),[a:b:1])$.  We have now proved \eqref{eq:vc_intersect2} in all cases, concluding the proof of Theorem \ref{thm:abctovc}.
\end{proof}

We end this paper by extending Theorems \ref{thm:vojtawithy} and \ref{thm:abctovc} to the setting where we blow up multiple points on one of the lines in $\pp^2$.

\begin{proposition}\label{prop:extend}
Let $L_1, L_2, L_3$ be three lines of $\pp^2$ defined over $\overline \qq$ in general position.  Let $p_1, \ldots, p_\ell$ be distinct points on $L_1\setminus (L_2\cup L_3)$ defined over $\overline \qq$.  Let $X'$ be the blowup of $\pp^2$ at all of $p_1,\ldots, p_\ell$.  We then construct $X$ and $\pi: X\too \pp^2$ from $X'$ by a sequence of blowups, where each blowup occurs at an intersection of the irreducible divisors which are in the inverse image of $L_1$.
\begin{itemize}
\item[(i)] If each blowup occurs at a point on $\widetilde{L_1}$, then Vojta's conjecture for the reduced part of $\pi^*(L_1 + L_2 + L_3)$ holds on $X$.

\item[(ii)] Otherwise, the $abc$ conjecture implies Vojta's conjecture for the reduced part of $\pi^*(L_1 + L_2 + L_3)$ on $X$.
\end{itemize}
\end{proposition}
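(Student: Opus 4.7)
The plan is to extend the proofs of Theorems \ref{thm:vojtawithy} and \ref{thm:abctovc} by running them in parallel, one for each base point $p_i$, exploiting the fact that the blowup towers over distinct $p_i$ have mutually disjoint exceptional loci. Choose coordinates so that $L_1 = (Y=0)$, $L_2 \cup L_3 = (XZ=0)$, and $p_i = [t_i:0:1]$ for distinct $t_i \in \overline\qq$; after enlarging $k$ to contain all the $t_i$ and enlarging $S$ so that the differences $t_i - t_j$ are $S$-units and $\mathcal O_S$ is a PID, Proposition \ref{prop:divandcoordring}(ii) applies separately in each tower with $x-1$ replaced by $x-t_i$. As in the proofs of the earlier theorems, the canonical divisor on $X$ is $-\pi^*(\Delta) + \sum_{i,j}\pi^*(E_i^{(j)})$, where $E_i^{(j)}$ is the $j$-th exceptional divisor in the tower over $p_i$, and Vojta's inequality for the reduced part $D$ of $\pi^*(L_1+L_2+L_3)$ at $P = [a:b:1]$ reduces to
\[
\sum_{i=1}^{\ell} h(E_i^{(1)}, P) + \sum_{i=1}^{\ell}\sum_{j\ge 2}\sum_{v\notin S}\lambda_v(E_i^{(j)}, P) < \epsilon\, h_X(A, P) + \sum_{v\notin S}\lambda_v(\Delta, P) + O(1).
\]

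The central observation is a place-by-place decoupling: for each non-archimedean $v\notin S$, the ultrametric inequality together with $v(t_i-t_j)=0$ for $i\ne j$ forces $v(a-t_i)>0$ for at most one index $i(v)$, so by Propositions \ref{prop:blowupht} and \ref{prop:divandcoordring}(ii) the $v$-contribution of any tower $i'\ne i(v)$ beyond $E_{i'}^{(1)}$ vanishes. A parallel metric decoupling at places in $S$, valid up to bounded corrections involving the fixed quantities $|t_i - t_j|_v$, shows that at each $v$ only one $|a-t_i|_v$ can be significantly small. Hence the analysis at each place reduces to the single-point setting of Theorems \ref{thm:vojtawithy} and \ref{thm:abctovc}, with $t_{i(v)}$ playing the role of $1$.

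For part (i), apply Lemma \ref{lem:elementary} verbatim to the relevant tower at each $v\notin S$ to bound its contribution by $\lambda_v((Y=0),P)$; for the $v\in S$ contribution, invoke Schmidt's subspace theorem once with the $v$-dependent linear forms $X - t_{i(v)} Y$ and $Y$, adapting the proof of Proposition \ref{prop:ridout} to produce a single bound on $\sum_{v\in S}\vplus(a - t_{i(v)})$ without the factor of $\ell$ that would arise from $\ell$ separate invocations of Proposition \ref{prop:ridout}. For part (ii), apply the Farey-tree analysis of Theorem \ref{thm:abctovc} in the relevant tower at each $v\notin S$, obtaining
\[
\sum_{j}\lambda_v(E_{i(v)}^{(j)}, P) \le \vplus(a - t_{i(v)}) - \trunloc(0), a - t_{i(v)}) + \lambda_v((Y=0), P),
\]
and invoke Conjecture \ref{conj:abc} applied to each Möbius transform $a/t_i$ (for which $\trunloc(1), a/t_i) = \trunloc(0), a - t_i)$ at $v\notin S$, since $t_i$ is an $S$-unit) to control the truncated counting functions $\sum_{v\notin S}\trunloc(0), a - t_i)$. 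The main obstacle, and the heart of the argument, is to combine the $\ell$ applications of $abc$ so as to match the trivial upper bound $\sum_{v\notin S}\vplus(a - t_{i(v)}) \le \ell h(a) + O(1)$ produced by the decoupling together with $\sum_v \vplus\bigl(\prod_i(a - t_i)\bigr) = \ell h(a) + O(1)$; the key is to exploit the decoupling identity $\sum_{v\notin S}\trunloc(0), a - t_{i(v)}) = \sum_i \sum_{v\notin S}\trunloc(0), a - t_i)$, which converts the $\ell$ separate $abc$ lower bounds into a single sharp lower bound sufficient to cancel $\ell h(a)$, so that after rescaling $\epsilon$ the bookkeeping yields Vojta's inequality.
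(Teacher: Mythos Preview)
Your setup, the place-by-place decoupling for $v\notin S$, and your treatment of part~(i) all match the paper's approach. One minor correction: the Schmidt step is not literally a single application, since $i(v)$ depends on $a$; as in the paper one runs the argument of Proposition~\ref{prop:ridout} separately for each of the $\ell^{|S|}$ assignments $v\mapsto i(v)$, which is presumably what you mean by ``adapting'' that proof.

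The real gap is in part~(ii). You correctly identify the crux as combining the $\ell$ separate $abc$ inequalities, but your proposed resolution does not work. Summing the three-point $abc$ for $a/t_1,\ldots,a/t_\ell$ and subtracting from $\sum_i h(a-t_i)=\ell\,h(a)+O(1)$ gives
\[
\sum_{i=1}^{\ell}\sum_{v\in S}\vplus(a-t_i)+\sum_{i=1}^{\ell}\sum_{\substack{v_p\notin S\\ n_{i,p}>0}} v_p\bigl(p^{\,n_{i,p}-1}\bigr)\le \ell\epsilon\,h(a)+\ell\sum_{v\notin S}\trunloc((0),a)+O(1),
\]
with an unavoidable factor $\ell$ in front of $\sum_{v\notin S}\trunloc((0),a)$. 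Your decoupling identity only rewrites the $a-t_i$ terms and does nothing to this factor. To finish, the right-hand side must be absorbed into $\sum_{v\notin S}\lambda_v((XZ=0),P)$, but for squarefree $a\in\mathcal O_S$ one has $\sum_{v\notin S}\trunloc((0),a)\sim h(a)$ while $\sum_{v\notin S}\lambda_v((X=0),P)$ contributes only one such copy; the excess $(\ell-1)h(a)$ cannot be made $\le\epsilon\,h(a)$ when $\ell\ge 2$.

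The paper avoids this by using not $\ell$ separate three-point $abc$'s but the single $(\ell+2)$-point form
\[
\sum_{i=1}^{\ell}\sum_{v\notin S}\trunloc((t_i),a)\ge(\ell-\epsilon)h(a)-\sum_{v\notin S}\trunloc((0),a)-O(1)\qquad(a\in\mathcal O_S),
\]
which is equivalent to Conjecture~\ref{conj:abc} via Belyi maps. Subtracting this from $\ell\,h(a)$ produces exactly one copy of $\sum_{v\notin S}\trunloc((0),a)$ on the right, and then the argument of Theorem~\ref{thm:abctovc} goes through verbatim. This Belyi step is the missing ingredient in your sketch.
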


\begin{proof}
We note that the canonical divisor formula \eqref{eq:canon_div} for $X$ remains the same.  Moreover, the pullback of $L_1 + L_2 + L_3$ to $X'$ is a normal-crossings divisor containing each of the exceptional divisor over $p_i$, so the reduced part of $\pi^*(L_1 + L_2 + L_3)$ is still given by \eqref{eq:div_intersect}, by the same argument as in the proof of Proposition \ref{prop:divandcoordring}.  Now, by a coordinate change on $\pp^2$, we may assume that $L_1 = (Y=0)$, $L_2$ and $L_3$ are defined by $XZ = 0$, and $p_i = [a_i : 0: 1]$ with $a_i\in k^*$.  As usual, it suffices to prove Vojta's conjecture for a larger $S$, so we may assume that $\mathcal O_S$ is a PID and $a_i - a_{i'}$ are all $S$-units for all $i\neq i'$.  As in the proof of Theorem \ref{thm:abctovc}, for each $v\notin S$, there is a maximal subtower which contributes all of the local heights over $p_i$, and this subtower corresponds to a path in the Stern--Brocot tree.  Therefore, the left-hand-side of \eqref{eq:vc_intersect_detail} (or \eqref{eq:vc_intersect2}) in our case is
\begin{equation}\label{eq:multiplepts}
\sum_{i=1}^\ell \sum_{v\in M_k} \gcdpv(a-a_i, b) + \sum_{i=1}^{\ell} \sum_{v\notin S}  \sum_j \gcdpv\left(\frac{(a-a_i)^{b_{i,v,j}}}{b^{a_{i,v,j}}}, \frac {b^{c_{i,v,j}}}{(a-a_i)^{d_{i,v,j}}}\right),
\end{equation}
where the inner index $j$ goes through some subtower over $p_i$ for each $i$ and $v$ and $\frac{a_{i,v,j}}{b_{i,v,j}}$ and $\frac{c_{i,v,j}}{d_{i,v,j}}$ are Farey neighbors.  By our assumption on $S$, for each $v\notin S$, only one $i$ contributes positively to \eqref{eq:multiplepts}, so the $v$-adic computation in \eqref{eq:multiplepts} for $v\notin S$ goes by the same argument as in the proofs of Theorems \ref{thm:vojtawithy} and \ref{thm:abctovc}.

For the $S$-part of \eqref{eq:multiplepts}, we argue differently for cases (i) and (ii).  For case (i), we follow the ideas of \cite[Theorem 6]{yasufuku2}, which proved Vojta's conjecture on $X'$.  For each $v\in S$, we can choose a sufficiently small radius so that the balls centered at $a_i$'s do not overlap. Thus, there exists some constant $C_v$ so that
\[
\sum_{i=1}^\ell \vplus(a-a_i) \le \max_i(\vplus(a-a_i)) + C_v.
\]
We now have one linear expression for each $v$ instead of a sum of $\ell$ expressions, so we can apply Schmidt's subspace theorem (Theorem \ref{thm:schmidt}) in the way we proved Proposition \ref{prop:ridout}; in fact, we apply the argument $\ell^{|S|}$ times, depending on which $i$ gives the maximum $\vplus(a-a_i)$ for each $v\in S$.  This enables us to prove Proposition \ref{prop:ridout} with the left-hand-side replaced by
\[
\sum_{i=1}^\ell \sum_{v\in S} \vplus(a-a_i),
\]
so now the rest of the proof of Theorem \ref{thm:vojtawithy} goes through.

For case (ii), we first remark that the $abc$ conjecture is equivalent to the following form, through the use of Belyi maps (cf. \cite{vF}):
\[
\sum_{i=1}^\ell \sum_{v\notin S} \trunloc((a_i),a) \ge (\ell-\epsilon) h(a) - \sum_{v\notin S} \trunloc((0), a) - O(1) \qquad \forall a\in\mathcal O_S.
\]
We note that we are really looking at $(0), (\infty), (a_1), \ldots, (a_\ell)$, so we have a total of $\ell+2$ points.  We now subtract this from $\sum_{i=1}^\ell h(a-a_i) = \ell h(a) + O(1)$ to obtain the following analog of \eqref{eq:abctovc_spart}:
\[
\sum_{i=1}^\ell \sum_{v\in S} \vplus(a-a_i) + \sum_{i=1}^\ell \sum_{\substack{v_p\notin S\\n_{i,p}>0}} v_p(p^{n_{i,p}-1}) \le \epsilon h(a) + \sum_{v\notin S} \trunloc((0),a) + O(1).
\]
The rest of the proof is the same as the proof of Theorem \ref{thm:abctovc}.
\end{proof}

\bibliographystyle{amsplain}
\bibliography{nevan}

\vspace{0.2in} \footnotesize
              \textsc{Department of Mathematics, College of Science and Technology, Nihon University, 1-8-14 Kanda-Surugadai Chiyoda, Tokyo 101-8308, JAPAN}\\

\vspace{0.1in} \texttt{yasufuku@math.cst.nihon-u.ac.jp}\\
\end{document}